\theoremstyle{plain}
\newtheorem{teo}{Theorem}[section]
\newtheorem{lem}[teo]{Lemma}
\newtheorem{pro}[teo]{Proposition}
\newtheorem{cor}[teo]{Corollary}
\newtheorem{eje}[teo]{Example}
\theoremstyle{definition}
\newtheorem*{dfn}{Definition}
\newtheorem{que}[teo]{Question}
\numberwithin{equation}{section}
\newcommand{\F}{\mathbb{F}}
\newcommand{\N}{\mathbb{N}}
\newcommand{\Z}{\mathbb{Z}}
\DeclareMathOperator{\Zen}{Z}
\DeclareMathOperator{\cen}{C}
\DeclareMathOperator{\br}{br}
\DeclareMathOperator{\Span}{span}
\renewcommand{\phi}{\varphi}
\title{On $w$-maximal groups}
\author{Jon Gonz\'alez-S\'anchez and Benjamin Klopsch}
\begin{document}

\maketitle

\begin{abstract}
Let $w = w(x_1,\ldots , x_n)$ be a word,
i.e.\ an element of the free group $F = \langle x_1, \ldots,
x_n\rangle$ on $n$ generators $x_1, \ldots, x_n$.  The verbal subgroup
$w(G)$ of a group $G$ is the subgroup generated by the set $\{ w
(g_1,\ldots ,g_n)^{\pm 1} \mid g_i \in G, \ 1\leq i\leq n \}$ of all $w$-values
in $G$.  We say that a (finite) group $G$ is \emph{$w$-maximal} if $\lvert G:w(G)
  \rvert > \lvert H:w(H) \rvert$ for all proper subgroups $H$ of $G$ and that $G$ is 
  \emph{hereditarily $w$-maximal} if every subgroup of $G$ is $w$-maximal. 
  In this text we study $w$-maximal and hereditarily $w$-maximal (finite) groups.
  \end{abstract}

\section{Introduction}

Let $p$ be a prime.  In~\cite{Th69}, Thompson observed that, if $G$ is a
finite $p$-group such that $\lvert G:[G,G] \rvert > \lvert H:[H,H]
\rvert$ for all proper subgroups $H$ of $G$, then the nilpotency class
of $G$ is at most $2$.  This insight prompted Laffey to prove that,
for $p>2$, the minimum number of generators $d(G)$ of a finite
$p$-group $G$ is bounded by $r$, where $p^r$ is the maximal order of a
subgroup of exponent $p$ in $G$; see~\cite{La73}.

Properties of what are known as $d$-maximal $p$-groups form a key
ingredient of Laffey's argument; a group $G$ is $d$-maximal if $d(H) <
d(G)$ for all proper subgroups $H$ of $G$.  The minimum number of
generators for a finite $p$-group $G$ is given by $d(G) = \log_p
\lvert G : \Phi(G) \rvert$, where $\Phi(G) = G^p [G,G]$ denotes the
Frattini subgroup of $G$.  Hence a finite $p$-group $G$ is $d$-maximal
if $\lvert G : \Phi(G) \rvert > \lvert H:\Phi(H) \rvert$ for all
proper subgroups $H$ of $G$.

In the context of regular representations of finite groups, Kahn
proved, for $p>2$, that every $d$-maximal finite $p$-group $G$ has
nilpotency class at most~$2$; see~\cite{Ka91}.  In fact, he showed
that in such a group $G$ the derived subgroup $[G,G]$ is of exponent
$p$ and contained in the center $\Zen(G)$ of $G$.  Subsequently
properties of $d$-maximal finite $p$-groups were investigated by Kahn
as well as Minh, e.g.\ see \cite{Ph96}.  More recently, a similar
class of groups was studied by the first author in order to bound the
index of the agemo subgroup of a finite $p$-group in terms of the
number of elements of order $p$.  In particular, he proved for $p>2$
that, if $G$ is a finite $p$-group such that $\lvert G : G^p
\gamma_{p-1}(G) \rvert > \lvert H : H^p \gamma_{p-1}(H) \rvert$ for
every proper subgroup $H$ of $G$, then the nilpotency class of $G$ is
bounded by $p-1$; see~\cite{Go09}.

The aim of this paper is to explore this circle of fruitful ideas in a
more general framework.  For this we introduce and explore the new
concept of a $w$-maximal group, which is briefly referred to
in~\cite{Go09}.  Let $w(\mathbf{x}) = w(x_1,\ldots , x_n)$ be a word,
i.e.\ an element of the free group $F = \langle x_1, \ldots,
x_n\rangle$ on $n$ generators $x_1, \ldots, x_n$.  The verbal subgroup
$w(G)$ of a group $G$ is the subgroup generated by the set $\{ w
(\mathbf{g})^{\pm 1} \mid \mathbf{g} \in G^{(n)} \}$ of all $w$-values
in $G$.

\begin{dfn}
  We say that a group $G$ is \emph{$w$-maximal} if $\lvert G:w(G)
  \rvert > \lvert H:w(H) \rvert$ for all proper subgroups $H$ of $G$.
\end{dfn}

The classes of groups referred to above are instances of $w$-maximal
groups for the special words $w = [x,y] = x^{-1}y^{-1}xy$, $w = x^p
[y,z]$ and $w = x^p [y_1,\ldots ,y_{p-1}]$, respectively.  Our study
of $w$-maximal groups for more general values of $w$ sheds more light
on existing theorems and leads to a number of new results.  Indeed,
Thompson's original theorem generalises to $w$-maximal groups for many
words $w$.  We also include the outcomes of our study of hereditarily
$w$-maximal groups, i.e.\ groups with the property that all their
subgroups are $w$-maximal.  Finally we suggest a range of questions
concerning the structure of $w$-maximal and hereditarily $w$-maximal
groups, in order to stimulate further research in this direction.  We
use a mixture of methods, involving, for instance, classical results
of Iwasawa~\cite{Iw41,Iw41b} and techniques from the theory of finite
$p$-groups and their inverse limits.

\medskip

The organisation of the paper is the following.  In
Section~\ref{sec:basic} we introduce the concept of $w$-breadth
and a partial ordering on the class of $w$-maximal groups.  We
show that, if $w$ is a commutator word, then every ascending chain
of $w$-maximal finite groups with respect to the defined ordering
becomes stationary; see Corollary~\ref{cor:stationary}.  A key
ingredient is Theorem~\ref{teo:breadth} which characterises words
$w$ admitting only finitely many (isomorphism types of) finite
groups of any fixed $w$-breadth.  Our results lead to a natural
classification problem. Section~\ref{sec:w_max_p_groups} focuses
on $w$-maximal finite $p$-groups.  We define and illustrate the
concept of interchangeability.  Theorem~\ref{thm:thompson} shows
that, if $w$ is interchangeable in a $w$-maximal finite $p$-group
$G$, then the verbal subgroup $w(G)$ is central in $G$.  The proof
is based on Thompson's original argument in \cite{Th69}.  We state
various questions, of which we highlight here the problem of
classifying those $w$ which are interchangeable in every
$w$-maximal finite $p$-group.  We also give some applications of
Theorem~\ref{thm:thompson}.  These provide, in any finite
$p$-group, a lower bound for the maximum size of subgroups with
certain properties in terms of the size of a quotient with similar
properties; e.g.\ see Propositions~\ref{pro:large_subgroups} and
Proposition~\ref{pro:epsilon}.  Corollary~\ref{cor-2} is an
extension of the result of Laffey mentioned above.  In
Section~\ref{sec:Lie}, we investigate $d$-maximal finite
$p$-groups by Lie theoretic means.  For $p$ odd, the
abelianisation $G/[G,G]$ and the commutator subgroup $[G,G]$ of a
$d$-maximal finite $p$-group $G$ are elementary abelian.  We show
that, furthermore, $\lvert [G,G] \rvert \leq p^{d(G)-2}$ and
construct examples which suggest that this bound may be best
possible.  In Section~\ref{sec:hereditarily_w} we study
hereditarily $w$-maximal groups.  Theorem~\ref{thm:example}
provides examples of non-trivial hereditarily $w$-maximal finite
groups, i.e.\ hereditarily $w$-maximal finite groups $G$ such that
$w(G) \not = 1$, for the higher derived words $w = \delta_k$, $k
\geq 2$.  Finally, in Section~\ref{sec:hereditarily_d} we
investigate hereditarily $d$-maximal groups.  In order to avoid
unwieldy examples, it is natural to restrict attention to the
class of residually-finite groups, which, in fact, reduces further
to the class of finite groups. Theorem~\ref{thm:classif} provides
a complete classification of hereditarily $d$-maximal finite
groups.

\medskip

\noindent \textit{Notation:} Short explanations of possibly
non-standard notation and terminology are given in the text.  Let
$G$ be a group. For $n \in \N$ we write $G^{(n)}$ for the $n$th
cartesian power of $G$, whereas $G^n$ refers to the subgroup
generated by all $n$th powers of elements of $G$.  If $G$ is a
topological group, then invariants such as $d(G)$ or $\br_w(G)$
are tacitly defined in terms of closed subgroups; e.g.\ $d(G)$ is
the minimum number of topological generators.

%%%

\section{Basic properties of $w$-maximal groups} \label{sec:basic}

Throughout this section, let $n \in \N$ and let $w = w(\mathbf{x})$ be
a fixed word, i.e.\ an element of the free group $F = \langle
x_1,\ldots,x_n \rangle$.  Our first lemma collects two elementary, but
useful properties of $w$-maximal groups.

\begin{lem}
  \label{lem:properties}
  Let $w$ be a word and let $G$ be a finite group.
  \begin{enumerate}
  \item[\textup{(a)}] If $w(G)=1$, then $G$ is $w$-maximal.
  \item[\textup{(b)}] If $G$ is $w$-maximal and $N \trianglelefteq G$
    with $N \subseteq w(G)$, then $G/N$ is $w$-maximal.
  \end{enumerate}
\end{lem}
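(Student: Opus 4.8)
The plan is to deduce both statements from two elementary and well-known facts about verbal subgroups, which I would record explicitly at the start since they recur throughout the section: verbal subgroups are \emph{monotone} under inclusion, i.e.\ $H \leq G$ implies $w(H) \leq w(G)$ because every $w$-value in $H$ is a $w$-value in $G$; and they are \emph{compatible with quotients}, i.e.\ $w(G/N) = w(G)N/N$ for $N \trianglelefteq G$, since the projection $G \to G/N$ carries $w$-values to $w$-values and every $w$-value of $G/N$ lifts to one of $G$.

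For part (a), assume $w(G) = 1$. If $H < G$ is any proper subgroup, monotonicity gives $w(H) \leq w(G) = 1$, so $\lvert H : w(H) \rvert = \lvert H \rvert < \lvert G \rvert = \lvert G : w(G) \rvert$, where the strict inequality uses that $G$ is finite and $H$ is proper. Hence $G$ is $w$-maximal. For part (b), let $N \trianglelefteq G$ with $N \subseteq w(G)$ and suppose $G$ is $w$-maximal. Compatibility with quotients together with $N \subseteq w(G)$ yields $w(G/N) = w(G)N/N = w(G)/N$, so $\lvert G/N : w(G/N) \rvert = \lvert G : w(G) \rvert$. Now let $\overline{H}$ be an arbitrary proper subgroup of $G/N$; by the correspondence theorem $\overline{H} = H/N$ for a subgroup $H$ with $N \subseteq H \lneq G$. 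Applying compatibility with quotients once more gives $w(H/N) = w(H)N/N$, and therefore
\[
  \lvert H/N : w(H/N) \rvert = \lvert H : w(H)N \rvert \leq \lvert H : w(H) \rvert < \lvert G : w(G) \rvert = \lvert G/N : w(G/N) \rvert ,
\]
where the first inequality holds because $w(H) \subseteq w(H)N$ and the strict inequality is the $w$-maximality of $G$ applied to the proper subgroup $H$. Thus $G/N$ is $w$-maximal.

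I do not expect any genuine obstacle here; the proof is essentially bookkeeping. The one point that needs a moment's care is to ensure, in part (b), that $w$-maximality of $G$ is invoked for a \emph{proper} subgroup of $G$ — this is exactly why one first passes to a proper subgroup $\overline{H} = H/N$ of $G/N$, which forces $H \lneq G$. It is also worth noting in passing that finiteness is used only for the strict inequalities in terms of orders and could be replaced by an index hypothesis in a more general setting.
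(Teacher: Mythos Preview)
Your proof is correct and follows exactly the direct argument the paper has in mind; the paper itself simply declares that ``these properties are clear'' without writing out any details. Your explicit recording of monotonicity and compatibility of verbal subgroups with quotients, and your care in ensuring the preimage $H$ is proper in $G$, faithfully unpack what the authors regard as routine.
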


\begin{proof}
  These properties are clear.
\end{proof}

Based on Lemma~\ref{lem:properties}~(b), we introduce a partial
ordering on the class of $w$-maximal groups.  Let $G$ and $H$ be
$w$-maximal groups.  We say that $H$ precedes $G$, in symbols $H
\preceq_w G$, if there exists an epimorphism $f: G \to H$ such
that $\ker f \leq w(G)$.  The binary relation $\preceq_w$ defines
a partial ordering on the class of $w$-maximal groups.

For a group $G$, we define the \textit{$w$-breadth} of $G$ as
$$
\br_w(G) := \sup \{ \lvert H:w (H) \rvert \mid H \leq G \}.
$$
The next theorem shows that, if $w$ is a commutator word, then groups
of bounded $w$-breadth can be controlled.

\begin{teo}
  \label{teo:breadth}
  Let $w = w(\mathbf{x})$ be an element of the free group $F = \langle
  x_1, \ldots, x_n \rangle$.  Then the following assertions are
  equivalent:
  \begin{enumerate}
  \item[\textup{(1)}] $w$ is a commutator word, i.e.\ $w \in [F,F]$;
  \item[\textup{(2)}] for every natural number $m$ there are only
    finitely many (isomorphism classes of) finite groups $G$ with
    $\br_w(G) \leq m$.
  \end{enumerate}
\end{teo}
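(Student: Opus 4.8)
The plan is to prove the two implications separately; I would take the direction $(2)\Rightarrow(1)$ by contraposition, as it is the easier one. So suppose $w\notin[F,F]$, and let $x_1^{a_1}\cdots x_n^{a_n}$ be the image of $w$ in $F/[F,F]\cong\Z^n$, where $(a_1,\dots,a_n)\neq(0,\dots,0)$; put $d=\gcd(a_1,\dots,a_n)\geq 1$ and fix a prime $p$ not dividing $d$. For $k\in\N$ consider the elementary abelian group $V=(\Z/p)^k$. Since $V$ is abelian, evaluation of $w$ on $V$ factors through $F/[F,F]$, so the set of $w$-values in $V$ is the image of the $\F_p$-linear map $V^{(n)}\to V$, $(v_1,\dots,v_n)\mapsto a_1v_1+\cdots+a_nv_n$; as some $a_i$ is coprime to $p$, this image is all of $V$, so $w(V)=V$. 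The same applies to every subgroup of $V$, hence $|H:w(H)|=1$ for all $H\leq V$ and so $\br_w(V)=1\leq m$. Since the groups $(\Z/p)^k$ ($k\in\N$) are pairwise non-isomorphic, this produces infinitely many finite groups of $w$-breadth at most $m$, contradicting $(2)$.

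For $(1)\Rightarrow(2)$ I would assume $w\in[F,F]$ and let $G$ be a finite group with $\br_w(G)\leq m$. The crucial observation is that $w(A)=1$ for every abelian group $A$, since every homomorphism from $F$ to $A$ kills $[F,F]\ni w$; consequently every abelian subgroup $A\leq G$ satisfies $|A|=|A:w(A)|\leq\br_w(G)\leq m$. In particular every element of $G$ has order at most $m$, so by Cauchy's theorem only primes $p\leq m$ divide $|G|$. As $|G|=\prod_p|G_p|$, where $G_p$ denotes a Sylow $p$-subgroup, it remains to bound $|G_p|$ by a function of $m$ for each such $p$.

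Fix $p$ and a Sylow $p$-subgroup $P\leq G$, and choose a maximal abelian normal subgroup $A$ of $P$. Then $\cen_P(A)=A$: otherwise $\cen_P(A)/A$ would be a non-trivial normal subgroup of the $p$-group $P/A$ and hence would meet its centre non-trivially, producing an abelian normal subgroup of $P$ strictly containing $A$ --- a contradiction. Therefore conjugation embeds $P/A$ into $\mathrm{Aut}(A)$, and since $|A|\leq m$ the order $|\mathrm{Aut}(A)|$, and with it $|P/A|$ and $|P|=|A|\cdot|P/A|$, is bounded by a function of $m$. Combined with the bound on the number of primes dividing $|G|$, this bounds $|G|$ by a function of $m$; as there are only finitely many isomorphism classes of finite groups of any fixed order, $(2)$ follows.

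The main obstacle I foresee is exactly the step in the last two paragraphs: deducing that $|G|$ is bounded from the fact that every abelian subgroup of $G$ has order at most $m$. I have indicated an elementary argument for it, treating each Sylow subgroup separately via a self-centralising maximal abelian normal subgroup. One could instead use that $w\in[F,F]$ forces $w(P)\leq[P,P]\leq\Phi(P)$, whence $d(P)=\log_p|P:\Phi(P)|\leq\log_2 m$ for every $p$-subgroup $P$ of $G$, and then combine the resulting bound on the number of generators of each Sylow subgroup with the bound on its exponent and invoke the positive solution of the restricted Burnside problem.
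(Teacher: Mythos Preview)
Your proof is correct, and for both implications you take a genuinely different route from the paper.

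For $(2)\Rightarrow(1)$ the paper works inside a free $\Z_p$-module $M$ of rank $d$, observes that $w(M)=p^rM$ for some $r$, and exhibits infinitely many finite quotients $M/K$ all of $w$-breadth bounded by $p^{rd}$. Your argument is simpler and sharper: by choosing a prime $p$ coprime to $\gcd(a_1,\dots,a_n)$ you force $w(V)=V$ for every elementary abelian $p$-group $V$, so you get infinitely many groups of $w$-breadth exactly~$1$.

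For $(1)\Rightarrow(2)$ the contrast is more substantial. The paper argues by contradiction: if there were infinitely many $p$-groups of bounded $w$-breadth, an inverse limit would produce an infinite pro-$p$ group of bounded rank, hence $p$-adic analytic; the existence of a uniformly powerful open subgroup then yields abelianisations of unbounded index, contradicting the bound on $\br_w$. Your argument is entirely elementary and avoids pro-$p$ groups, analytic groups, and uniform subgroups altogether: from $w\in[F,F]$ you extract the single fact that every abelian subgroup of $G$ has order at most $m$, and then the classical observation that a maximal abelian normal subgroup of a finite $p$-group is self-centralising bounds $|P|$ directly by $m\cdot|\mathrm{Aut}(A)|$. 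This is both shorter and requires much less machinery; the price is only that the paper's approach naturally feeds into the subsequent proposition describing $\mathcal{G}_m(p)$ in the three regimes for $w$, whereas your argument is tailored specifically to the commutator case. Your suggested alternative via the restricted Burnside problem also works but trades the pro-$p$ machinery for an even deeper black box.
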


\begin{proof}
  First suppose that $w$ is a commutator word, and let $m$ be a
  natural number.  If $G$ is a group with $\br_w(G) \leq m$, then the
  prime divisors of $\lvert G \rvert$ are less than or equal to~$m$.
  Indeed, if $S$ is a non-trivial Sylow $p$-subgroup of $G$, then $p
  \leq \lvert S:w(S) \rvert \leq m$.

  Consider now a prime number $p$ less than or equal to $m$.  We claim
  that there exist only finitely many (isomorphism classes of)
  $p$-groups $P$ with $\br_w(P) \leq m$.  This will show that there is
  a uniform bound on the sizes of the Sylow $p$-subgroups of a finite
  group $G$ with $\br_w(G) \leq m$.  Consequently, there is a uniform
  bound on the order of a finite group $G$ with $\br_w(G) \leq m$, and
  there are only finitely many finite groups with this property.

  For a contradiction, assume that there exist an infinite number of
  pairwise non-isomorphic finite $p$-groups with $w$-breadth bounded
  by $m$.  These groups and the possible epimorphisms between them
  give rise to an inverse system.  Since the inverse limit of a
  non-empty system of finite sets is non-empty, one obtains an
  infinite pro-$p$ group $G$ with $\br_w(G) \leq m$.  Since $w \in
  [F,F]$, we have $d(H) \leq \log_p \lvert H : w(H) \rvert \leq \log_p
  m$ for every closed subgroup $H$ of $G$.  Consequently the group $G$
  has finite rank, equivalently $G$ is $p$-adic analytic;
  cf.~\cite[Chapters 8 and~9]{DidSMaSe99}.  Therefore there exists a
  uniformly powerful open subgroup $U$ of $G$; see
  \cite[Corollary~4.3]{DidSMaSe99}.  But in this case, $\lvert U^{p^k}
  : w(U^{p^k}) \rvert \geq \lvert U^{p^k}:[U^{p^k},U^{p^k}] \rvert$
  tends to infinity as $k$ tends to infinity.  This is in
  contradiction to $\br_w(G) \leq m$.

  Conversely, suppose that $w$ is not a commutator word, i.e.\ $w \not
  \in [F,F]$.  Let $p$ be a prime and consider a free $\Z_p$-module
  $M$ with basis $\{x_1,\ldots ,x_d\}$, where $\Z_p$ denotes the ring
  of $p$-adic integers.  Then $w(M)$ is a non-trivial characteristic
  subgroup of $M$ and hence $w(M) = p^r M$ for some $r \in \N_0$.  In
  fact, for all submodules $U$ of $M$ we have $w(U) = p^r U$ and
  consequently $\lvert U:w(U) \rvert \leq \lvert M:w(M) \rvert$.  Set
  $m := p^{rd} = \lvert M:w(M) \rvert$.  Then for every open submodule
  $K$ of $M$ with $K \subseteq w(M)$, the finite quotient $M/K$ has
  $w$-breadth bounded by $m$.  This shows that there are infinitely
  many (isomorphism classes of) finite abelian groups $G$ with
  $\br_w(G) \leq m$.
\end{proof}

Motivated by the proof of Theorem~\ref{teo:breadth}, we record

\begin{pro}
  Let $w = w(\mathbf{x})$ be an element of the free group $F = \langle
  x_1, \ldots, x_n \rangle$.  Let $m \in \N$ and $p$ a prime.  Let
  $\mathcal{G}_m(p)$ denote the class of all pro-$p$ groups $G$ with
  $\br_w(G) \leq m$.
  \begin{enumerate}
  \item[\textup{(1)}] If $w \in [F,F]$, then $\mathcal{G}_m(p)$ consists
    of finitely many isomorphism classes of finite $p$-groups.
  \item[\textup{(2)}] If $w(F) F^p [F,F] = F$, then $\mathcal{G}_m(p)$
    consists of all pro-$p$ groups.
  \item[\textup{(3)}] Suppose that $w(F) F^{p^{k+1}} [F,F] = F^{p^k}
    [F,F]$ with $k \in \N$.  Then $\mathcal{G}_m(p)$ consists of
    isomorphism classes of $p$-adic analytic pro-$p$ groups.
    Conversely, if $G$ is a $p$-adic analytic pro-$p$ group of
    dimension $d$ and if $m \geq p^{kd}$, then there exists an open
    subgroup $U$ of $G$ such that all subgroups of $U$ are contained in
    $\mathcal{G}_m(p)$.
  \end{enumerate}
\end{pro}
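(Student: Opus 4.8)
The plan is to prove the three parts in sequence, each essentially repackaging an observation already made in the proof of Theorem~\ref{teo:breadth}.

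For part~(1): if $w \in [F,F]$, then Theorem~\ref{teo:breadth} already tells us there are only finitely many finite groups $G$ with $\br_w(G) \le m$, a fortiori only finitely many finite $p$-groups. It remains to show that a pro-$p$ group $G$ with $\br_w(G) \le m$ is in fact finite. First I would note that $w \in [F,F]$ forces $w(H) \supseteq [H,H]$-style control in the form $d(H) \le \log_p \lvert H : w(H) \rvert \le \log_p m$ for every closed subgroup $H \le G$, so $G$ has finite rank and is $p$-adic analytic. Pass to a uniformly powerful open subgroup $U \le G$ via \cite[Corollary~4.3]{DidSMaSe99}; if $U$ (equivalently $G$) were infinite, then $\lvert U^{p^k} : w(U^{p^k}) \rvert \ge \lvert U^{p^k} : [U^{p^k}, U^{p^k}] \rvert \to \infty$, contradicting $\br_w(G) \le m$. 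Hence $G$ is finite, and the count of isomorphism classes is the one supplied by Theorem~\ref{teo:breadth}.

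For part~(2): the hypothesis $w(F) F^p [F,F] = F$ says that the image of $w(F)$ generates $F / \Phi(F) \cong \F_p^n$ as an abelian group, i.e.\ some $w$-value (or product of $w$-values) is not contained in $\Phi(F)$. I would argue that this property is inherited by every quotient: for any pro-$p$ group $G$ generated (topologically) by $g_1, \dots, g_d$, applying a substitution homomorphism $F \to G$ shows $w(G) \Phi(G) = G$, whence $w(G) = G$ by the Frattini argument for pro-$p$ groups. Then every closed subgroup $H \le G$ likewise satisfies $w(H) = H$, so $\br_w(G) = 1 \le m$ and $G \in \mathcal{G}_m(p)$. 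Thus $\mathcal{G}_m(p)$ is the class of all pro-$p$ groups.

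For part~(3): the hypothesis $w(F) F^{p^{k+1}} [F,F] = F^{p^k} [F,F]$ means that modulo $[F,F]$ the verbal subgroup $w(F)$ "reaches down to level $k$" in the $p$-power filtration of the free abelian pro-$p$ group $F / [F,F]$ but no further in an obvious way. The first assertion — that every $G \in \mathcal{G}_m(p)$ is $p$-adic analytic — I would get as in part~(1): using the hypothesis, for any closed $H \le G$ one bounds $\lvert H : H^{p^k}[H,H] \rvert$, hence $d(H) = \log_p \lvert H : \Phi(H) \rvert$, in terms of $\lvert H : w(H) \rvert \le m$ (since $w(H) \subseteq H^{p^k}[H,H]$ up to the relation, the index $\lvert H : w(H) \rvert$ controls $\lvert H : H^{p^k}[H,H] \rvert$ and thus $\lvert H/[H,H] : (H/[H,H])^{p^k} \rvert$, forcing $d(H)$ bounded); finite rank gives $p$-adic analyticity. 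For the converse, take $G$ $p$-adic analytic of dimension $d$ and $m \ge p^{kd}$; choose a uniformly powerful open $U \le G$ via \cite[Corollary~4.3]{DidSMaSe99}. For a uniformly powerful pro-$p$ group $V$ of dimension $d$, the abelianisation is $\Z_p^d$ and $V^{p^k}[V,V] = V^{p^k}$ with $\lvert V : V^{p^k} \rvert = p^{kd}$; combined with the hypothesis on $w$ one checks $w(V) \supseteq V^{p^k}[V,V]$, so $\lvert V : w(V) \rvert \le p^{kd} \le m$. Since every subgroup $V \le U$ is again uniformly powerful of dimension $\le d$ (uniform pro-$p$ groups are hereditarily uniform), this bound holds for all $V \le U$, giving $\br_w(U) \le m$ and hence every subgroup of $U$ lies in $\mathcal{G}_m(p)$.

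The main obstacle is part~(3), specifically the precise bookkeeping showing that the single relation $w(F) F^{p^{k+1}}[F,F] = F^{p^k}[F,F]$ translates, via substitution homomorphisms, into the inclusion $w(V) \supseteq V^{p^k}[V,V]$ (or at least $w(V) \supseteq V^{p^{k}}$) for uniformly powerful $V$, and dually into the bound on $d(H)$ for arbitrary closed $H$; the commutator subgroup has to be carried along carefully because $w$ need not be a commutator word here, so one is really working in the pro-$p$ completion of the free metabelian-by-nothing quotient $F/[F,F] \cong \Z_p^n$ and transferring the conclusion back. Everything else is a direct reuse of the uniformly-powerful-subgroup argument from the proof of Theorem~\ref{teo:breadth}.
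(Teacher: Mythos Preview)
Your treatments of parts~(1) and~(2) are correct and essentially match the paper's. For the first assertion of part~(3) your idea is also right, though it can be said more directly: the hypothesis gives $w(F) \subseteq w(F)F^{p^{k+1}}[F,F] = F^{p^k}[F,F] \subseteq F^p[F,F]$ (since $k \geq 1$), so $w(H) \subseteq \Phi(H)$ for every closed $H \leq G$, and $d(H) \leq \log_p \lvert H:w(H)\rvert \leq \log_p m$ immediately.

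The converse in part~(3), however, has a genuine gap: the parenthetical claim that ``uniform pro-$p$ groups are hereditarily uniform'' is false. For $p$ odd, let $U$ be the first congruence kernel of the Heisenberg group over $\Z_p$; then $U$ is uniform of dimension~$3$, but the open subgroup generated by the two elementary one-parameter subgroups is isomorphic to the full Heisenberg group, which is not powerful. (Relatedly, your assertions that a uniform $V$ has abelianisation $\Z_p^d$ and satisfies $V^{p^k}[V,V]=V^{p^k}$ fail in this same example.) The paper avoids this as follows. Writing $w = x_1^{e_1}\cdots x_n^{e_n}c$ with $c \in [F,F]$ and $e := \gcd(e_1,\ldots,e_n)$, one has $F^e \subseteq w(F)$ and $w(F)[F,F]=F^e[F,F]$; the hypothesis then forces $p^k$ to be the exact $p$-part of $e$, so for \emph{any} pro-$p$ group $H$ (no uniformity needed) $H^{p^k}=H^e \subseteq w(H)$. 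To bound $\lvert H:H^{p^k}\rvert$ for an arbitrary closed subgroup $H$ of a uniform $U$ of dimension $d$, the paper invokes a Haar-measure computation \cite[Lemma~3.4]{Kl05}: one has $\mu\{h^{p^k}\mid h\in H\}=p^{-k}\mu(H)$, whence $\lvert H:H^{p^k}\rvert \leq p^{kd}$. This is the missing ingredient replacing your hereditary-uniformity step.
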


\begin{proof}
  (1) This follows immediately from Theorem~\ref{teo:breadth}.

  (2) Suppose that $w(F) F^p [F,F] = F$.  Then $w(H) = H$ for any
  finite $p$-group $H$, because the $w$-values in $H$ generate $H$
  modulo the Frattini subgroup $H^p [H,H] = \Phi(H)$.  The claim
  follows.

  (3) Let $G$ be in $\mathcal{G}_m(p)$.  Since $w(F) \subseteq F^p
  [F,F]$, we have $d(H) \leq \log_p \lvert H : w(H) \rvert
  \leq \log_p m$ for every closed subgroup $H$ of $G$.  Thus $G$ has
  finite rank and is $p$-adic analytic.

  Conversely, let $G$ be a $p$-adic analytic pro-$p$ group of
  dimension $d$ and suppose that $m \geq p^{kd}$.  Take for $U$ a
  uniformly powerful open subgroup of $G$.  A simple collection
  process allows us to write $w = x_1^{e_1} \cdots x_r^{e_r} c$, where
  $e_1, \ldots, e_r \in \Z$ and $c \in [F,F]$.  Since $w \not \in
  [F,F]$, at least one of the exponents $e_i$ is non-zero, and we put
  $e := \gcd \{e_1, \ldots, e_r\}$.  Then $F^e \subseteq w(F)$ and
  $w(F) [F,F] = F^e [F,F]$.  From $w(F) F^{p^{k+1}} [F,F] = F^{p^k}
  [F,F]$ it follows that $p^k$ is the highest $p$-power dividing $e$.
  Hence for any closed subgroup $H$ of $U$ one has $H^{p^k} = H^e
  \subseteq w(H)$.  Moreover, for any closed subgroup $H$ of $U$, one
  has $\mu \{ h^{p^k} \mid h \in H \} = p^{-k} \mu(H)$, where $\mu$
  denotes the Haar measure on $H$; see \cite[Lemma~3.4]{Kl05}.  Thus
  $\lvert H : w(H) \rvert \leq \lvert H : H^{p^k} \rvert \leq p^{kd}$
  for any open and, by passing to the appropriate limit, for any
  closed subgroup $H$ of $U$.
\end{proof}

From Lemma~\ref{lem:properties} and Theorem~\ref{teo:breadth} one
can easily deduce

\begin{cor} \label{cor:stationary}
  Let $w = w(\mathbf{x})$ be a commutator word in a free group $F =
  \langle x_1, \ldots, x_n \rangle$, i.e.\ let $w \in [F,F]$.  Then
  every ascending chain
  \begin{equation*}
    H_1 \preceq_w H_2 \preceq_w \ldots \preceq_w H_i \preceq_w \ldots
  \end{equation*}
  of $w$-maximal finite groups is eventually stationary.

  In particular, if $H$ is a $w$-maximal finite group, then there
  exists a $w$-maximal finite group $G$ such that $H \preceq_w G$ and
  $G$ is maximal with respect to the partial ordering $\preceq_w$.
\end{cor}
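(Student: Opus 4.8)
The plan is to derive both conclusions from the finiteness statement in Theorem~\ref{teo:breadth} together with the transport-of-structure properties in Lemma~\ref{lem:properties}. Since $w \in [F,F]$, part~(1) of Theorem~\ref{teo:breadth} guarantees that, for each fixed $m$, there are only finitely many isomorphism classes of finite groups with $w$-breadth at most $m$.

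The first step is to observe that along the chain $H_1 \preceq_w H_2 \preceq_w \ldots$ the orders are non-decreasing: if $H_i \preceq_w H_{i+1}$, then there is an epimorphism $f \colon H_{i+1} \to H_i$ with $\ker f \le w(H_{i+1})$, so $\lvert H_i \rvert$ divides $\lvert H_{i+1} \rvert$; moreover $w(H_i) = f(w(H_{i+1}))$, hence $\lvert H_i : w(H_i) \rvert = \lvert H_{i+1} : w(H_{i+1}) \rvert$ because $\ker f \le w(H_{i+1})$. More generally, for \emph{every} subgroup $K \le H_i$, taking $L := f^{-1}(K) \le H_{i+1}$ one gets $\ker f \le w(H_{i+1}) \le$ (well, $\ker f \le w(H_{i+1})$ and one checks $L \cap w(H_{i+1}) \supseteq \ker f$, so) an epimorphism $L \to K$ with kernel inside $w(L)$, whence $\lvert K : w(K) \rvert = \lvert L : w(L) \rvert \le \br_w(H_{i+1})$. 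Taking the supremum over $K$ shows $\br_w(H_i) \le \br_w(H_{i+1})$, but in fact the reverse-image argument combined with $H_{i+1}$ itself being a subgroup of $H_{i+1}$ gives that $\br_w$ is monotone \emph{increasing} along the chain; what I actually need is a \emph{uniform bound}. The clean way: since $H_{i+1}$ is $w$-maximal and $f \colon H_{i+1} \to H_i$ has kernel in $w(H_{i+1})$, Lemma~\ref{lem:properties}(b) is not quite enough on its own, so instead I argue that $\br_w(H_i) \le \br_w(H_{i+1})$ via pullback, and separately that all the $H_i$ are bounded — the honest argument is that $\lvert H_1 \rvert \le \br_w(H_1) =: m$ (since $H_1 \le H_1$ gives $\lvert H_1 : w(H_1) \rvert \le \br_w(H_1)$, but to bound $\lvert H_1 \rvert$ I should note $w(H_1) \ne 1$ need not hold)\ldots

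Let me restate the clean chain of implications I intend to use. Fix $m := \br_w(H_2)$, say — no: the right invariant is $m := \sup_i \br_w(H_i)$, which I must show is finite. Here is the argument: each $H_i$ is a subgroup (up to isomorphism) of a quotient of $H_{i+1}$ obtained by killing something inside $w(H_{i+1})$; iterating, every $H_i$ with $i \le j$ is isomorphic to a quotient $H_j / N_{ij}$ with $N_{ij} \le w(H_j)$. Hence any subgroup of $H_i$ lifts to a subgroup of $H_j$ with the same $w$-index, so $\br_w(H_i) \le \br_w(H_j)$ whenever $i \le j$; thus the sequence $(\br_w(H_i))_i$ of positive integers is non-decreasing. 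But it is also bounded: by Theorem~\ref{teo:breadth}, if it were unbounded we would have infinitely many finite groups of $w$-breadth $\le$ any given value — wait, that is automatic. The actual contradiction comes differently. Let me instead fix attention on $\lvert H_i \rvert$: these divide one another upward, and the index $\lvert H_i : w(H_i)\rvert$ is constant along the chain, call it $c$; so $\lvert w(H_i) \rvert = \lvert H_i \rvert / c$ is non-decreasing. If $\lvert H_i \rvert \to \infty$, then $\br_w(H_i) \ge \lvert H_i : w(H_i) \rvert = c$ is constant, which gives no contradiction — so the subtlety is genuine and I must use that $\br_w$ controls \emph{all} subgroups.

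Here is the correct and final plan, in order. Step 1: show that for $i \le j$, $H_i$ is isomorphic to $H_j / N$ with $N \le w(H_j)$ (compose the given epimorphisms, using that $\ker$ of a composite with all kernels inside verbal subgroups again lands inside $w(H_j)$, since verbal subgroups push forward onto verbal subgroups). Step 2: deduce $\br_w(H_i) \le \br_w(H_j)$ for $i \le j$ by pulling subgroups of $H_i$ back through the epimorphism (the pullback $L$ of $K \le H_i$ satisfies $\lvert L : w(L)\rvert = \lvert K : w(K)\rvert$ because $\ker \le w(H_j)\cap L \subseteq w(L)$). Step 3 — the key step: argue that the non-decreasing sequence $\br_w(H_i)$ is \emph{bounded}, hence eventually constant equal to some $m$; this is where I expect the main difficulty, and the resolution is that if $\br_w$ were strictly increasing infinitely often, I would get, by the inverse-limit construction exactly as in the proof of Theorem~\ref{teo:breadth}, an infinite pro-$p$ (or finite-rank residually-finite) group of bounded $w$-breadth, contradicting the unboundedness — more cleanly, once $\br_w(H_i) = m$ stabilizes, there are only finitely many isomorphism types available by Theorem~\ref{teo:breadth}, so I should instead run Step 3 as: the sequence of pairs $(\lvert H_i\rvert, \br_w(H_i))$ takes values, but $\br_w(H_i) \ge \lvert H_i\rvert^{1/?}$\ldots no. Cleanest: since $w \in [F,F]$, Theorem~\ref{teo:breadth} bounds $\lvert G\rvert$ in terms of $\br_w(G)$; combined with $\lvert H_i \rvert$ non-decreasing and $\br_w(H_i)$ non-decreasing, IF the chain of orders never stabilizes then $\br_w(H_i) \to \infty$ is \emph{not} forced — so the genuine input is: among the $H_i$ only finitely many isomorphism types can appear \emph{because} the pairs $(\lvert H_i : w(H_i)\rvert$ constant, and the whole tower is a single inverse system) force a uniform bound on $\br_w$ via the finite-rank argument. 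Step 4: once $\br_w(H_i)$ is eventually constant $=m$, Theorem~\ref{teo:breadth} says only finitely many isomorphism classes of finite groups have $w$-breadth $\le m$; since the orders $\lvert H_i\rvert$ are non-decreasing and bounded (by the max order among those finitely many groups), they stabilize, and an epimorphism between finite groups of equal order is an isomorphism, so the chain is eventually stationary. Step 5: for the ``in particular'' statement, start with $H$; repeatedly pass to a strict predecessor-extension whenever one exists; by the chain condition just proved this process terminates at a $\preceq_w$-maximal $G$ with $H \preceq_w G$ (invoking Zorn's lemma if one prefers, using that chains have upper bounds — but here finiteness of chains makes it elementary). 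The main obstacle is genuinely Step 3: pinning down why the increasing breadths cannot run off to infinity, for which I will re-use the pro-$p$ inverse-limit and uniformly-powerful-subgroup argument from the proof of Theorem~\ref{teo:breadth} rather than re-deriving it.
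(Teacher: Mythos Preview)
You are missing the one observation that makes the paper's proof a single line: since each $H_i$ is itself $w$-maximal, the supremum defining $\br_w(H_i)$ is attained at $H_i$, i.e.\ $\br_w(H_i) = \lvert H_i : w(H_i)\rvert$. You already correctly observe that $\lvert H_i : w(H_i)\rvert$ is constant along the chain (equal to $m := \lvert H_1 : w(H_1)\rvert$), so in fact $\br_w(H_i) = m$ for every $i$. Theorem~\ref{teo:breadth} then says there are only finitely many isomorphism types of finite groups with $\br_w \le m$; hence the chain cannot contain infinitely many non-isomorphic terms, and it is eventually stationary. All of your Step~3 machinery --- inverse limits, uniformly powerful subgroups, trying to force $\br_w(H_i)\to\infty$ --- is unnecessary, and indeed you never needed $\br_w$ to be monotone along the chain at all.

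There is also an actual error in your Step~2: the inclusion $w(H_j)\cap L \subseteq w(L)$ is false in general (verbal subgroups are not well-behaved under intersection with arbitrary subgroups), so your pullback argument for $\br_w(H_i) \le \br_w(H_j)$ does not go through as written. Fortunately, as just explained, you do not need this inequality. Once you insert the observation $\br_w(H_i) = \lvert H_i : w(H_i)\rvert$, your Steps~1, 4, and 5 (suitably trimmed) already constitute a complete proof identical in spirit to the paper's.
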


\begin{proof}
  We argue by contradiction.  If
  \begin{equation*}
    H_1 \preceq_w H_2 \preceq_w \ldots \preceq_w H_i \preceq_w \ldots
  \end{equation*}
  is a strictly increasing chain of $w$-maximal groups, then there
  exist infinitely many finite groups whose $w$-breadth is bounded by
  $\lvert H_1:w (H_1) \rvert$ in contradiction to
  Theorem~\ref{teo:breadth}.
\end{proof}

This leads to a natural classification problem.

\begin{que} \label{qu:max-w-max} Let $F$ be the free group on
  generators $x_1, \ldots, x_n$, and let $w$ be a commutator word in
  $F$, i.e.\ let $w \in [F,F]$.  Can one classify $w$-maximal groups
  of a given $w$-breadth $m$ which are maximal with respect to the
  partial ordering $\preceq_w$?
\end{que}

%%%

\section{Properties of $w$-maximal $p$-groups and
  applications} \label{sec:w_max_p_groups}

In this section we focus on $w$-maximal finite $p$-groups which are
somehow easier to control than $w$-maximal finite groups.  The
following concept will be useful throughout this section.  Let $w =
w(\mathbf{x})$ be a word and let $G$ be a finite $p$-group.  We say
that $w$ is \textit{interchangeable} in $G$ if for every normal
subgroup $N$ of $G$,
\begin{equation*}
  [w (N),G] \leq [N,w (G)] \cdot [w (G),G]^p[w (G),G,G].
\end{equation*}
Observe that, if $w$ is interchangeable in $G$, then $w$ is
interchangeable in every quotient of $G$.  The next lemma provides a
considerable supply of words which are interchangeable in every finite
$p$-group.

\begin{lem}
  \label{swap}
  Let $G$ be a finite $p$-group, and let $w$ be equal to one of the
  group words
  \begin{enumerate}
  \item[\textup{(i)}] $\gamma_k = [y_1,\ldots ,y_k]$ for some $k \in
    \N$,
  \item[\textup{(ii)}] $x^{p^i} [y_1,\ldots ,y_k]$ for some $i, k \in
    \N$ with $k \leq p-1$,
  \item[\textup{(iii)}] $x^{p^i} [y_1,\ldots ,y_p]$ for some $i \in \N$
    with $i\geq 2$.
  \end{enumerate}
  Then $w$ is interchangeable in $G$.
\end{lem}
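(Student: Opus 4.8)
The plan is to handle all three words by the same mechanism: identify the verbal subgroups explicitly, reduce the required inclusion to a ``weight transfer'' estimate between $N$ and $G$ (proved with the three subgroups lemma), and, in the presence of powers, add a Hall--Petrescu analysis in which the $p$-adic arithmetic of the binomial coefficients and the hypotheses on $k$ and $i$ enter.

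\textbf{Setup.} Since $G$ is a finite $p$-group, specialising variables shows $w(G)=\gamma_k(G)$ in case (i) and $w(G)=G^{p^i}\gamma_k(G)$ in cases (ii) and (iii) (with $k$ replaced by $p$), and the same formulae describe $w(N)$ because $N\trianglelefteq G$; in particular $w(N)\le w(G)$. Write $R:=[N,w(G)]\cdot[w(G),G]^p[w(G),G,G]$ for the right-hand side. Using $[AB,G]=[A,G][B,G]$ for normal $A,B$, the inclusion $[w(N),G]\le R$ becomes $[\gamma_k(N),G]\le R$ in case (i), and the two inclusions $[\gamma_k(N),G]\le R$ and $[N^{p^i},G]\le R$ in cases (ii) and (iii); the commutator corrections produced when passing to these generators and factorisations (via $[ab,c]=[a,c]^b[b,c]$ and its companions) all land in $[G^{p^i},G,G]\gamma_{k+2}(G)\le[w(G),G,G]$, hence are harmless.

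\textbf{Weight transfer and the commutator part.} I would prove, by induction on $a$, that for all $a,b\ge1$
\begin{equation*}
  [\gamma_a(N),\gamma_b(G)]\ \le\ [N,\gamma_{a+b-1}(G)]\cdot\gamma_{a+b+1}(G).
\end{equation*}
The case $a=1$ is trivial. For $a\ge2$ write $\gamma_a(N)=[\gamma_{a-1}(N),N]$, reduce modulo $\gamma_{a+b+1}(G)$ (in which $G$ acts trivially, so commutators are multilinear) to the generators $[[c,n],d]$ with $c\in\gamma_{a-1}(N)$, $n\in N$, $d\in\gamma_b(G)$, and apply the three subgroups lemma: modulo $\gamma_{a+b+1}(G)$ one has $[[c,n],d]\equiv[[n,d],c]^{-1}[[d,c],n]^{-1}$. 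Here $[d,c]\in\gamma_{a+b-1}(G)$, so the second factor lies in $[N,\gamma_{a+b-1}(G)]$; and $[n,d]\in\gamma_{b+1}(G)$, so the first lies in $[\gamma_{a-1}(N),\gamma_{b+1}(G)]$, which by the inductive hypothesis (applied to the pair $(a-1,b+1)$) lies in $[N,\gamma_{a+b-1}(G)]\gamma_{a+b+1}(G)$. Taking $a=k$, $b=1$ gives $[\gamma_k(N),G]\le[N,\gamma_k(G)]\gamma_{k+2}(G)\le R$ in all three cases, since $\gamma_k(G)\le w(G)$ and $\gamma_{k+2}(G)\le[w(G),G,G]$. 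This settles case (i) and the first of the two inclusions in cases (ii) and (iii).

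\textbf{The power part of (ii) and (iii).} It remains to show $[v^{p^i},g]\in R$ for all $v\in N$, $g\in G$ (reducing to such generators modulo $R$). The Hall--Petrescu collection formula, applied to $[v^{p^i},g]=v^{-p^i}\bigl(v\,[v,g]\bigr)^{p^i}$ and to $[v,g^{p^i}]$, gives $[v^{p^i},g]=[v,g]^{p^i}\prod_{j=2}^{p^i}e_j^{\binom{p^i}{j}}$ and $[v,g^{p^i}]=[v,g]^{p^i}\prod_{j=2}^{p^i}f_j^{\binom{p^i}{j}}$ with $e_j\in\gamma_j(\langle v,[v,g]\rangle)\le\gamma_{j+1}(G)$ and $f_j\in\gamma_{j+1}(\langle v,g\rangle)$; hence $[v^{p^i},g]=[v,g^{p^i}]\cdot F^{-1}E$, where $E=\prod e_j^{\binom{p^i}{j}}$, $F=\prod f_j^{\binom{p^i}{j}}$, and $[v,g^{p^i}]\in[N,G^{p^i}]\le[N,w(G)]$ because $g^{p^i}\in w(G)$ and $v\in N$. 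For the discrepancy $F^{-1}E$ I would use: (a) $v,[v,g]\in N$ (as $[N,G]\le N$), so all $e_j,f_j\in N$, and each is, up to a term of strictly larger weight, of the form $[u,h]$ with $u\in N$ of weight $\ge j$ — for $e_j$ with the outer bracket closing on an $N$-entry, so that $e_j\in[N,\gamma_j(G)]\gamma_{j+2}(G)$; (b) for $2\le j\le p^i-1$, $v_p\!\bigl(\binom{p^i}{j}\bigr)=i-v_p(j)\ge1$, and indeed $=i$ whenever $p\nmid j$, which holds when $j<k\le p$; (c) the only non-$p$-divisible coefficient, $\binom{p^i}{p^i}=1$, multiplies terms in $\gamma_{p^i+1}(G)$, and $p^i+1\ge k+2$ — exactly where $k\le p-1$ (case (ii)) and $i\ge2$ (case (iii)) are used — so those terms lie in $\gamma_{k+2}(G)\le[w(G),G,G]$. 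Then terms of weight $\ge k+1$ appear as $p$-th powers of elements of $\gamma_{k+1}(G)\le[w(G),G]$, hence lie in $[w(G),G]^p$; a term $[u,h]^{p^iu'}$ with $u\in N$ of weight $j<k$ is rewritten, via $[u,h]^{p^iu'}=[u^{p^iu'},h]\cdot(\text{corrections})$ with $u^{p^iu'}\in\gamma_j(G)^{p^i}\le G^{p^i}\le w(G)$, as an element of $[N,w(G)]$ modulo corrections of strictly larger weight; and one closes the loop by an induction on the nilpotency class of $G$ — equivalently on the weight of the $N$-entry, which strictly increases at each step — with base cases ``weight $\ge k$'' and ``weight $\ge k+1$'' as above.

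\textbf{Main obstacle.} The identification of the verbal subgroups and the weight-transfer estimate are routine; the crux is the last step, namely organising the Hall--Petrescu bookkeeping so that the recursion terminates and every collection term is ultimately absorbed into $[N,w(G)]$ (by peeling a $p^i$-th power into $w(G)=G^{p^i}\gamma_k(G)$), into $[w(G),G]^p$ (once a commutator reaches weight $k+1$), or into $[w(G),G,G]$ (once it reaches weight $k+2$, which the hypotheses on $k$, $i$ guarantee for the single non-$p$-divisible term). One must pin down just enough of the shape of the Hall--Petrescu terms $e_j,f_j$ — that they involve $v$, that $e_j$ sits in $[N,\gamma_j(G)]$ modulo deeper terms, and that $f_j$ is modulo deeper terms a commutator $[u,h]$ with $u\in N$ of weight $\ge j$ — to keep the induction on weight running; this is the technical heart of the proof.
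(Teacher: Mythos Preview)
Your overall architecture is sound and is essentially a self-contained reconstruction of what the paper obtains by citation: for (i) the paper simply quotes Khukhro's lemma $[\gamma_k(N),G]\le[G,{}_kN]\le[N,\gamma_k(G)]$, and for (ii)--(iii) it quotes a power--commutator swap of Fern\'andez-Alcober, Gonz\'alez-S\'anchez and Jaikin-Zapirain, namely $[N^{p^i},G]\equiv[N,G^{p^i}]$ modulo $\gamma_{p+1}(G)$ (resp.\ $\gamma_{p+1}(G)^p\gamma_{p+2}(G)$ when $i\ge2$). Your weight-transfer inequality via the three-subgroups lemma is a correct proof of a slightly weaker version of Khukhro's lemma (with an extra $\gamma_{k+2}$ error term), which suffices here.

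There is, however, a genuine slip in the power part. For the terms coming from the $[v,g^{p^i}]$ expansion you write $f_j\equiv[u,h]$ with $u\in N$ of weight $j$ and then ``peel'' the $p^i$-th power into $u$, obtaining $[u^{p^iu'},h]$ with $u^{p^iu'}\in G^{p^i}\le w(G)$; you then assert this lies in $[N,w(G)]$. But here $h$ is the outer entry of the $f_j$-commutator, which in general is $g\in G$, not an element of $N$ or of $w(G)$. What you have produced is an element of $[N\cap w(G),G]\subseteq[w(G),G]$, and $[w(G),G]$ is \emph{not} contained in $R=[N,w(G)]\cdot[w(G),G]^p[w(G),G,G]$. (Your $e_j$-argument happens to survive only because there all entries lie in $N$, so $h\in N$ and $[u^{p^iu'},h]\in[w(G),N]=[N,w(G)]$.)

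The fix is to push the power into the other slot: modulo terms of strictly larger weight one has $[u,h]^{p^iu'}\equiv[u,h^{p^iu'}]$, and now $u\in N$ while $h^{p^iu'}\in G^{p^i}\le w(G)$, so $[u,h^{p^iu'}]\in[N,w(G)]\subseteq R$. The higher-weight corrections from this rewriting again have an $N$-entry of strictly larger weight, so your induction on weight still closes. With this correction your Hall--Petrescu analysis can be made to go through; it then amounts to a direct proof of the cited swap $[N^{p^i},G]\equiv[N,G^{p^i}]\pmod{\gamma_{p+1}(G)}$ (and its refinement for $i\ge2$), rather than an appeal to it.
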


\begin{proof}
  Let $N$ be a normal subgroup of $G$.

  (i) We start with the case $w = \gamma_k = [y_1,\ldots ,y_k]$, where
  $k \in \N$.  Then, by \cite[Lemma~4-9]{Kh73}, one has
  \begin{equation}
    [\gamma_k(N),G] \leq [G,_kN] \leq [\gamma_k(G),N] = [N,\gamma_k(G)].
  \end{equation}

  (ii) Next suppose that $w = x^{p^i}[y_1,\ldots ,y_k]$, where $i,k
  \in \N$ with $k \leq p-1$.  By (i), we have $[\gamma_k(N),G] \leq
  [N,\gamma_k(G)]$.  For $p^i$th powers \cite[Theorem~2.4]{FeGoJa08}
  yields
  $$
  [N^{p^i},G] \equiv [N,G^{p^i}] \pmod{\gamma_{p+1}(G)}.
  $$
  Therefore we conclude that
  $$
  [N^{p^i}\gamma_k(N),G] \leq [N, G^{p^i}\gamma_k(G)] \cdot
  [\gamma_k(G),G,G].
  $$

  (iii) Finally suppose that $w = x^{p^i}[y_1,\ldots ,y_p]$, where $i
  \in \N$ with $i \geq 2$.  Again $[\gamma_p(N),G] \leq
  [N,\gamma_p(G)]$.  Since $i\geq 2$, we conclude from
  \cite[Theorem~2.4]{FeGoJa08} that
  $$
  [N^{p^i},G] \equiv [G^{p^i},N]
  \pmod{\gamma_{p+1}(G)^p\gamma_{p+2}(G)}.
  $$
  This yields
  $$
  [N^{p^i}\gamma_p(N),G] \leq [G^{p^i}\gamma_p(G),N] \cdot
  [\gamma_p(G),G]^p\cdot [\gamma_p(G),G,G].
  $$
\end{proof}

\begin{eje}
  Let $p$ be a prime, and let $H = \langle \alpha \rangle \ltimes A$
  be the pro-$p$ group of maximal class, where $\langle \alpha \rangle
  \cong C_p$, $A = \langle x_1, \ldots, x_{p-1} \rangle \cong
  \Z_p^{p-1}$, and the action of $\alpha$ on $A$ is given by
  $$
  [x_i,\alpha] = x_{i+1} \text{ for $1 \leq i \leq p-2$,} \quad
  \text{and} \quad [x_{p-1},\alpha] = \prod_{j=1}^{p-1} x_j^{-\binom{p}{j}}.
  $$
  Let $G := H/[H^p,H]^p[H^p,H,H]$.  Then $G$ is a finite $p$-group of
  order $p^{p+2}$ and the word $x^p$ is not interchangeable in $G$.

  Indeed, consider the image $N$ of $A$ in $G$.  Since $\lvert G/N^p
  \rvert = p^p$, we have $\gamma_p(G/N^p) = 1$.  Hence $G/N^p$ is
  regular and $G/N^p$ has exponent $p$.  This shows that $N^p = G^p$.
  Now one easily verifies that $[N^p,G] = [G^p,G]$ is a non-trivial
  central cyclic subgroup of $G$.  This shows that
  $$
  [N^p,G] \not \subseteq 1 = [N,G^p] [G^p,G]^p [G^p,G,G].
  $$
\end{eje}

We are ready to prove the main result of this section.

\begin{teo}
  \label{thm:thompson}
  Let $w$ be a word, and let $G$ be a $w$-maximal finite $p$-group
  such that $w$ is interchangeable in $G$.  Then one has $w(G) \leq
  \Zen (G)$.
\end{teo}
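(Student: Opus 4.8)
The plan is to follow Thompson's strategy. Suppose, for a contradiction, that $w(G) \not\leq \Zen(G)$, i.e.\ $V := [w(G),G] \neq 1$. First I would reduce to the case where $V$ has order $p$: put $V_0 := [w(G),G]^p[w(G),G,G]$, a normal subgroup of $G$ contained in $V \subseteq w(G)$ with $V_0 \subsetneq V$ (otherwise $V = V^p[V,G]$ would force $V = [V,G]$, contradicting nilpotency). By Lemma~\ref{lem:properties}(b) the group $G/V_0$ is again $w$-maximal, interchangeability is inherited by quotients, and $[w(G/V_0),G/V_0] = V/V_0$ is a non-trivial central subgroup of $G/V_0$ of exponent $p$; factoring out in addition a subgroup of index $p$ of $V/V_0$ (normal in $G$ and contained in $w(G)$), I may assume from now on that $V = [w(G),G]$ has order $p$, so that in particular $[w(G),G]^p = [w(G),G,G] = 1$ and $V$ is central.

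I would then test $w$-maximality against the subgroup $C := \cen_G(w(G))$, the centralizer of the normal subgroup $w(G)$; since $w(G) \not\leq \Zen(G)$ this is a proper, normal subgroup of $G$. Two properties of $C$ come out of interchangeability and of the commutator, respectively. Applying the interchangeability inequality with $N = C$, and using $[C,w(G)] = 1$ together with $[w(G),G]^p = [w(G),G,G] = 1$, gives $[w(C),G] \leq [C,w(G)]\cdot[w(G),G]^p[w(G),G,G] = 1$, so $w(C) \leq \Zen(G)$; since also $w(C) \leq w(G)$, one obtains $w(C) \leq w(G)\cap\Zen(G)$. On the other hand, as $[w(G),G,G] = 1$ and $V$ is abelian, the commutator map $w(G)\times G \to V$ is bilinear with left radical $w(G)\cap\Zen(G)$ and right radical $C$; hence $g\mapsto[-,g]$ embeds $G/C$ into $\operatorname{Hom}\big(w(G)/(w(G)\cap\Zen(G)),\,V\big)$, and since $V\cong\F_p$ and $w(G)/(w(G)\cap\Zen(G))$ is elementary abelian (it embeds via $a\mapsto[a,-]$ into $\operatorname{Hom}(G,V)$), this group of homomorphisms has order $\lvert w(G):w(G)\cap\Zen(G)\rvert$. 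Therefore $\lvert G:C\rvert \leq \lvert w(G):w(G)\cap\Zen(G)\rvert$.

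Combining the two estimates yields $\lvert C:w(C)\rvert \geq \lvert C:w(G)\cap\Zen(G)\rvert = \lvert C\rvert/\lvert w(G)\cap\Zen(G)\rvert \geq \lvert C\rvert\cdot\lvert G:C\rvert/\lvert w(G)\rvert = \lvert G:w(G)\rvert$, so the proper subgroup $C$ contradicts the $w$-maximality of $G$. I expect the one genuinely delicate point to be the choice of test subgroup: maximal subgroups $M$ will not do, since for them $w$-maximality forces $w(M)=w(G)$ and interchangeability then only yields $[w(G),M]=[w(G),G]$, which is entirely consistent with $w(G)$ being non-central; it is the centralizer $\cen_G(w(G))$, together with the duality inherent in the commutator pairing and the preliminary reduction to $\lvert[w(G),G]\rvert=p$, that forces the indices to collapse. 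Everything else is routine bookkeeping with indices and standard commutator identities.
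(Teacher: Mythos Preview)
Your proof is correct and follows essentially the same route as the paper: reduce to the case $\lvert [w(G),G]\rvert = p$ (the paper does this by taking a minimal counterexample, you by explicitly passing to the quotient by $[w(G),G]^p[w(G),G,G]$ and then a further index-$p$ subgroup), test $w$-maximality against $C=\cen_G(w(G))$, use interchangeability to get $w(C)\leq w(G)\cap\Zen(G)$, and use the bilinear commutator pairing $G/C\times w(G)/(w(G)\cap\Zen(G))\to\F_p$ to compare indices. The only cosmetic difference is that the paper obtains the equality $\lvert G:C\rvert=\lvert w(G):w(G)\cap\Zen(G)\rvert$ from non-degeneracy of the pairing, whereas you argue only the inequality $\leq$, which is all that is needed.
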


\begin{proof}
  For a contradiction, assume that $G$ is a minimal counterexample.
  Then $[w(G),G]$ is cyclic of order $p$ and contained in
  the centre $\Zen(G)$ of $G$, i.e.\ $[w(G),G]^p [w(G),G,G] = 1$.
  Consider the following characteristic subgroups of $G$:
  \begin{align*}
    N_1 & := \{ x \in G \mid [x,w(G)] = 1\} = \cen_G(w(G)), \\
    N_2 & := \{ x \in w(G) \mid [x,G] = 1\} = \Zen(G) \cap w(G).
  \end{align*}

  We observe that $N_2 \leq N_1$.  Since $w$ is interchangeable in
  $G$, we conclude that $[w(N_1),G] \leq [N_1,w(G)] = 1$.  In
  particular $w(N_1) \leq N_2$.

  Next we define $\langle , \rangle :G/N_1 \times w
  (G)/N_2\longrightarrow [w (G),G]\cong \F_p$ given by $\langle
  xN_1,yN_2\rangle =[x,y]$.  This map $\langle ,\rangle$ is a pairing
  of abelian $p$-groups, and hence $\lvert G:N_1 \rvert = \lvert
  w(G):N_2 \rvert$.  Therefore $\lvert N_1 : w(N_1) \rvert \geq \lvert
  N_1:N_2 \rvert = \lvert G:w(G) \rvert$, and $G$ is not $w$-maximal,
  the desired contradiction.
\end{proof}

Of course, Question~\ref{qu:max-w-max} specialises to

\begin{que}
  Let $F$ be the free group on generators $x_1, \ldots, x_n$, and let
  $w$ be a commutator word in $F$, i.e.\ let $w \in [F,F]$.  Can one
  classify $w$-maximal $p$-groups of a given $w$-breadth $m$ which are
  maximal with respect to the partial ordering $\preceq_w$?
\end{que}

Other natural questions arising from our discussion are

\begin{que}
  Characterise words which are interchangeable in all $w$-maximal
  $p$-groups.
\end{que}

\begin{que}
  Let $w$ be a word and let $G$ be a $w$-maximal group.  If $w$ is
  interchangeable in all $w$-maximal $p$-groups, we know from
  Theorem~\ref{thm:thompson} that $w(G)$ is contained in the center $\Zen(G)$
  of $G$.  Can one describe properties of the inclusion of $w(G)
  \subseteq G$ in other situations?
\end{que}

Next we give some applications of $w$-maximal $p$-groups which
allow us to translate, in any finite $p$-group $G$, information
about the size of quotients with certain properties to information
about the maximal size of subgroups with similar properties.
Indeed, Proposition~\ref{pro:large_subgroups} guarantees the
existence of large subgroups of comparatively small nilpotency
class, i.e.\ of order at least $\lvert G : \gamma_c(G) \rvert$ and
class at most $c$. In a similar vein,
Proposition~\ref{pro:epsilon} shows that there exist subgroups of
nilpotency class $2$ and exponent $p^i$ which are of size at least
$\lvert G : G^{p^i}[G,G] \rvert$.  Corollary~\ref{cor-2} is an
extension of the result of Laffey, which we quoted in the
introduction; see~\cite{La73}.

\begin{pro}\label{pro:large_subgroups}
  Let $G$ be a finite $p$-group and $c \in \N$.  Then there exists a
  subgroup $H$ of $G$ of nilpotency class at most $c$ such that
  $\lvert H \rvert \geq \lvert G : \gamma_c(G) \rvert$.
\end{pro}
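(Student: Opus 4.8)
The plan is to use the word $w = \gamma_c$ and apply the machinery from Section~\ref{sec:w_max_p_groups}. First I would observe that it suffices to prove the statement for a group $G$ that is $\gamma_c$-maximal. Indeed, among all finite $p$-groups $G_0$ satisfying the conclusion of the proposition fails, pick one of minimal order; more directly, let $H_0 \leq G$ be a subgroup minimizing $\lvert H_0 : \gamma_c(H_0) \rvert$ — then $H_0$ is $\gamma_c$-maximal by definition, and $\lvert H_0 : \gamma_c(H_0) \rvert \leq \lvert G : \gamma_c(G) \rvert$. So replacing $G$ by $H_0$, we may assume $G$ itself is $\gamma_c$-maximal, and it is enough to produce inside such a $G$ a subgroup $H$ of class at most $c$ with $\lvert H \rvert \geq \lvert G : \gamma_c(G) \rvert$.

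Next I would invoke Lemma~\ref{swap}(i), which says that $\gamma_c$ is interchangeable in every finite $p$-group, together with Theorem~\ref{thm:thompson}: since $G$ is $\gamma_c$-maximal and $\gamma_c$ is interchangeable in $G$, we get $\gamma_c(G) \leq \Zen(G)$. The point of this is that $\gamma_c(G) \leq \Zen(G)$ forces $\gamma_{c+1}(G) = [\gamma_c(G),G] = 1$, i.e.\ $G$ itself has nilpotency class at most $c$. So one can simply take $H = G$; then trivially $\lvert H \rvert = \lvert G \rvert \geq \lvert G : \gamma_c(G) \rvert$, and $H$ has class at most $c$. That completes the argument.

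The only slightly delicate point — and the step I would present carefully — is the reduction at the start: one must make sure that a subgroup of $G$ minimizing the index $\lvert H_0 : \gamma_c(H_0) \rvert$ is genuinely $\gamma_c$-maximal in the sense of the definition (strict inequality for \emph{proper} subgroups), which is immediate since any proper subgroup of $H_0$ has index of its $\gamma_c$-verbal subgroup no smaller than that of $H_0$, and minimality was chosen over \emph{all} subgroups of $G$ including those of $H_0$; a subgroup realizing the minimum and minimal with this property among subgroups of $G$ will do. I do not anticipate any real obstacle here — the substance is entirely carried by Theorem~\ref{thm:thompson} applied to $w = \gamma_c$, and the rest is bookkeeping.
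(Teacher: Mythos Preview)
Your overall strategy --- pass to a $\gamma_c$-maximal subgroup, then apply Lemma~\ref{swap}(i) and Theorem~\ref{thm:thompson} to conclude that this subgroup already has class at most $c$ --- is exactly what the paper does. However, the reduction step is stated with the wrong direction and, as written, does not go through.

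You choose $H_0 \leq G$ \emph{minimising} $\lvert H_0 : \gamma_c(H_0) \rvert$. Recall the definition: a group $H_0$ is $\gamma_c$-maximal if $\lvert H_0 : \gamma_c(H_0) \rvert > \lvert K : \gamma_c(K) \rvert$ for every proper subgroup $K$ of $H_0$. Minimising gives the reverse inequality (indeed, the trivial subgroup always realises the minimum value $1$), so your $H_0$ is not $\gamma_c$-maximal, and the inequality $\lvert H_0 : \gamma_c(H_0) \rvert \leq \lvert G : \gamma_c(G) \rvert$ you derive is useless for the conclusion: even after showing $H_0$ has class at most $c$, you cannot deduce $\lvert H_0 \rvert \geq \lvert G : \gamma_c(G) \rvert$ from it.

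The fix is simply to reverse the direction: take $H_0 \leq G$ \emph{maximising} $\lvert H_0 : \gamma_c(H_0) \rvert$, and among such maximisers choose one of minimal order. Then every proper subgroup $K \lneqq H_0$ satisfies $\lvert K : \gamma_c(K) \rvert < \lvert H_0 : \gamma_c(H_0) \rvert$ (non-strict by maximality, strict by minimality of $\lvert H_0 \rvert$), so $H_0$ is genuinely $\gamma_c$-maximal; and now $\lvert H_0 : \gamma_c(H_0) \rvert \geq \lvert G : \gamma_c(G) \rvert$. Applying Lemma~\ref{swap}(i) and Theorem~\ref{thm:thompson} to $H_0$ gives $\gamma_{c+1}(H_0)=1$, whence $\lvert H_0 \rvert \geq \lvert H_0 : \gamma_c(H_0) \rvert \geq \lvert G : \gamma_c(G) \rvert$. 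With this correction your argument coincides with the paper's proof.
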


\begin{proof}
  Clearly, there exists a $\gamma_c$-maximal subgroup $H$ of $G$ such
  that $\lvert G : \gamma_c(G) \rvert \leq \lvert H : \gamma_c(H)
  \rvert$.  From Lemma~\ref{swap} and Theorem~\ref{thm:thompson} we deduce
  that $\gamma_{c+1}(H) = 1$.
\end{proof}

\begin{pro} \label{pro:epsilon} Let $G$ be a finite $p$-group and let $i \in
  \N$ with $i \geq 2$ if $p=2$.  Put $\epsilon := 0$ if $p$ is odd,
  and $\epsilon := 1$ if $p=2$.  Then there exists a subgroup $H$ of
  $G$ of nilpotency class at most $2$ and exponent $p^{i+\epsilon}$
  such that $\lvert H \rvert \geq \lvert G:G^{p^{i + \epsilon}}[G,G]
  \rvert$.
\end{pro}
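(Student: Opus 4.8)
The plan is to mimic the proof of Proposition~\ref{pro:large_subgroups}, applying the interchangeability machinery to the word $w = x^{p^{i+\epsilon}}[y,z]$. First I would choose a $w$-maximal subgroup $H$ of $G$ for this $w$. By the definition of $w$-maximality (and the fact that every finite group contains $w$-maximal subgroups realising the largest index $\lvert K : w(K) \rvert$), one may pick $H \leq G$ with
\[
\lvert G : G^{p^{i+\epsilon}}[G,G] \rvert = \lvert G : w(G) \rvert \leq \lvert H : w(H) \rvert = \lvert H : H^{p^{i+\epsilon}}[H,H] \rvert,
\]
so in particular $\lvert H \rvert \geq \lvert G : G^{p^{i+\epsilon}}[G,G] \rvert$, which is the required size bound. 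It remains to verify the structural claims: that $H$ has nilpotency class at most $2$ and exponent dividing $p^{i+\epsilon}$.

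Next I would invoke Lemma~\ref{swap} to see that $w = x^{p^{i+\epsilon}}[y,z]$ is interchangeable in every finite $p$-group. For $p$ odd this is case~(ii) with $k = 2 \leq p-1$ (noting $2 \leq p-1$ holds since $p \geq 3$); for $p = 2$ we have $\epsilon = 1$ and $i + \epsilon \geq 2$ (by hypothesis $i \geq 2$ when $p = 2$, so actually $i+\epsilon\ge 3$), so this is case~(iii) with $p = 2$ and exponent $i+\epsilon \geq 2$. Hence $w$ is interchangeable in $H$, and Theorem~\ref{thm:thompson} gives $w(H) = H^{p^{i+\epsilon}}[H,H] \leq \Zen(H)$. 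In particular $[H,H] \leq \Zen(H)$, so $\gamma_3(H) = [[H,H],H] = 1$ and $H$ has nilpotency class at most~$2$.

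Finally I would deduce the exponent bound. Since $H$ has class at most $2$, the standard commutator collection formula applies: for $a,b \in H$ and any $m$,
\[
(ab)^m = a^m b^m [b,a]^{\binom{m}{2}}.
\]
We want to show $H^{p^{i+\epsilon}} = 1$. From $H^{p^{i+\epsilon}}[H,H] \leq \Zen(H)$ we already know $h^{p^{i+\epsilon}}$ is central for every $h \in H$, and moreover the map $h \mapsto h^{p^{i+\epsilon}}$ becomes a homomorphism modulo $[H,H]$; combined with class $\le 2$ one checks via the collection formula that $x \mapsto x^{p^{i+\epsilon}}$ is in fact an endomorphism of $H$ (here the role of $\epsilon$ is exactly to absorb the troublesome binomial coefficient $\binom{p^{i+\epsilon}}{2}$ on the commutator term: for $p$ odd, $p \mid \binom{p^i}{2}$ and $[H,H]^{p^i}=1$ since $[H,H]\le w(H)$ has exponent dividing... — more carefully, $[H,H] \leq w(H) = H^{p^{i+\epsilon}}[H,H]$ forces us to argue that $[H,H]$ has exponent dividing $p^{i+\epsilon}$, which follows once we know the power map is an endomorphism with image $H^{p^{i+\epsilon}}$). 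Thus $H^{p^{i+\epsilon}}$ is a subgroup, and since $w(H) = H^{p^{i+\epsilon}}[H,H]$ it suffices to show $H^{p^{i+\epsilon}} = 1$; but if $H^{p^{i+\epsilon}} \neq 1$ then, passing to a suitable section, $\lvert H : w(H)\rvert$ could be enlarged in a proper subgroup of $H$, contradicting $w$-maximality of $H$ — concretely, one shows $w(H) = H^{p^{i+\epsilon}}[H,H]$ being central forces $w(H) = 1$ by the same pairing argument as in Theorem~\ref{thm:thompson}. I expect the delicate point to be precisely this last step: extracting $\exp(H) \mid p^{i+\epsilon}$ (rather than merely class~$\le 2$) from $w(H) \leq \Zen(H)$, which requires carefully tracking the $2$-torsion obstruction — this is exactly why $\epsilon = 1$ is forced when $p = 2$.
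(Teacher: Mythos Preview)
Your opening moves are sound: choosing $H\le G$ to be $w$-maximal with $\lvert H:w(H)\rvert\ge\lvert G:w(G)\rvert$ for $w=x^{p^{i+\epsilon}}[y,z]$ gives the size bound, and Lemma~\ref{swap} together with Theorem~\ref{thm:thompson} yields $w(H)\le\Zen(H)$, hence class at most~$2$. You also correctly observe that in class~$2$ with $H^{p^{i+\epsilon}}\le\Zen(H)$ one has $[H,H]^{p^{i+\epsilon}}=[H^{p^{i+\epsilon}},H]=1$, so the $p^{i+\epsilon}$-power map is an endomorphism of $H$.

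The gap is the final step: you assert that $w$-maximality forces $H^{p^{i+\epsilon}}=1$, invoking ``the same pairing argument as in Theorem~\ref{thm:thompson}''. This is false. Theorem~\ref{thm:thompson} only concludes $w(H)\le\Zen(H)$; it does \emph{not} give $w(H)=1$, and no variant of the pairing argument will. A concrete counterexample is furnished by Example~\ref{ej:example1}: for odd $p$ and $i=1$ the group $G=\exp(L)$ there is $d$-maximal (i.e.\ $w$-maximal for $w=x^p[y,z]$), yet it has exponent $p^2$, not $p$. So your $w$-maximal $H$ can itself fail the exponent condition.

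The paper's proof avoids this by not demanding that the $w$-maximal group have small exponent. Instead, after reducing (by induction) to $G$ itself $w$-maximal for $w=x^{p^i}[y,z]$ and obtaining class~$\le 2$, it passes to a further subgroup: for odd $p$ one uses regularity to take $H:=\Omega_i(G)=\{g:g^{p^i}=1\}$, which has exponent $p^i$ by definition and size $\lvert G:G^{p^i}\rvert\ge\lvert G:w(G)\rvert$; for $p=2$ one uses Hall--Petrescu to see that $x\mapsto x^{2^{i+1}}$ is an endomorphism (here $[G,G]^{2^i}=1$ and $\binom{2^{i+1}}{2}$ is divisible by $2^i$), and takes $H:=\Omega_{i+1}(G)$, its kernel. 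Note that you had essentially reached this point: once you know the $p^{i+\epsilon}$-power map is an endomorphism of your $H$, the correct move is to replace $H$ by the kernel $\Omega_{i+\epsilon}(H)$, which has the required exponent and size $\lvert H:H^{p^{i+\epsilon}}\rvert\ge\lvert H:w(H)\rvert\ge\lvert G:G^{p^{i+\epsilon}}[G,G]\rvert$.
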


\begin{proof}
  Put $w =x^{p^i}[y,z]$.  By induction on the order of $G$ we may
  assume that $G$ is $w$-maximal.  Recall the notation $\Omega_i(G) :=
  \langle x \in G \mid x^{p^i}=1 \rangle$.  Consider first the case
  when $p$ is odd.  Lemma~\ref{swap} and Theorem~\ref{thm:thompson} show that
  $[G,G,G]=1$.  Therefore $G$ is a regular $p$-group, and
  $$
  \lvert G:G^{p^i}[G,G] \rvert \leq \lvert G:G^{p^i} \rvert = \lvert
  \Omega_i(G) \rvert,
  $$
  where $H := \Omega_i(G) = \{ x \in G \mid x^{p^i}=1 \}$ is a
  subgroup of $G$ of exponent $p^i$ and nilpotency class at most $2$;
  see \cite[Kapitel III \S 10]{Hu67}.

  Now suppose that $p=2$. Since $G$ is $w$-maximal, Lemma~\ref{swap}
  and Theorem~\ref{thm:thompson} yield $[G,G,G]=1$ and $[G,G]^{2^i} =
  [G^{2^i},G] = 1$.  By the Hall-Petrescu identity we have
  $(xy)^{2^{i+1}} = x^{2^{i+1}} y^{2^{i+1}}$ for all $x, y \in G$; see
  \cite[Kapitel III, Satz 9.4]{Hu67}.  Therefore $\phi : G \rightarrow
  G^{2^{i+1}}$, $x \mapsto x^{2^{i+1}}$ is a surjective homomorphism
  with kernel $\Omega_{i+1}(G) = \{x \in G \mid x^{2^{i+1}} = 1\}$.
  Hence $\lvert \Omega_{i+1}(G) \rvert = \lvert G : G^{2^{i+1}} \rvert
  \geq \lvert G : G^{2^{i+1}} [G,G] \rvert$, and $\Omega_{i+1}(G)$ is
  a subgroup of $G$ of exponent $2^{i+1}$ and nilpotency class at most
  $2$.
\end{proof}

A result of Glauberman \cite{Gl08} allows us to deduce the existence
of normal subgroups with similar properties.

\begin{cor}
  Let $G$ be a finite $p$-group and let $i \in \N$.  Suppose that $p
  \geq 7$.  Then there exists a normal subgroup $H$ of $G$ of
  nilpotency class at most $2$ and exponent $p^i$ such that $\lvert H
  \rvert = \min \{ \lvert G:G^{p^{i + \epsilon}}[G,G] \rvert,
  p^{\lfloor (2p+4)/3 \rfloor} \}$.
\end{cor}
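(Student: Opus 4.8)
The plan is to combine Proposition~\ref{pro:epsilon} with Glauberman's theorem from \cite{Gl08}, which guarantees that in a finite $p$-group with $p \geq 7$ any subgroup of nilpotency class at most $2$ and exponent dividing $p^i$ of order at least $p^{\lfloor (2p+4)/3 \rfloor}$ contains (or is contained in) a \emph{normal} such subgroup of comparable size; more precisely, Glauberman's result produces a normal subgroup of class at most $2$ and exponent dividing $p^i$ whose order is at least the minimum of the order of the given subgroup and $p^{\lfloor (2p+4)/3 \rfloor}$. So the strategy is first to produce a (not necessarily normal) witness subgroup of the right type and size, then to ``normalise'' it via Glauberman at the cost of truncating its size at $p^{\lfloor (2p+4)/3\rfloor}$.

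First I would invoke Proposition~\ref{pro:epsilon}: since $p \geq 7$ is odd we have $\epsilon = 0$, and the proposition (applied with the given $i$, noting $i \geq 1$ suffices since $p$ is odd) yields a subgroup $K \leq G$ of nilpotency class at most $2$ and exponent dividing $p^i$ with $\lvert K \rvert \geq \lvert G : G^{p^i}[G,G] \rvert$. Next I would feed $K$ into Glauberman's theorem \cite{Gl08}: because $p \geq 7$ and $K$ has class $\leq 2$ and exponent dividing $p^i$, there is a subgroup $H$ of $G$ which is normal in $G$, still of class at most $2$ and exponent dividing $p^i$, with $\lvert H \rvert \geq \min\{\lvert K \rvert, p^{\lfloor (2p+4)/3\rfloor}\}$. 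Combining the two bounds gives $\lvert H \rvert \geq \min\{\lvert G : G^{p^i}[G,G]\rvert, p^{\lfloor (2p+4)/3\rfloor}\}$, and by discarding elements if necessary (or by the exact form of Glauberman's statement) one may arrange equality; this establishes the claim.

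The main obstacle is matching the precise hypotheses and conclusion of Glauberman's result in \cite{Gl08} to what is needed here — in particular, whether his theorem delivers a normal subgroup of exponent dividing $p^i$ of order \emph{exactly} $\min\{\ldots\}$, or merely at least that, and whether the bound $p^{\lfloor (2p+4)/3\rfloor}$ on ``powerful embeddings'' of class-$2$ exponent-$p^i$ subgroups is stated for all $i$ or only for $i=1$ with a reduction to the general case. If Glauberman's statement is only for exponent $p$ (the case $i=1$), one would need an extra reduction step: pass to $G/G^{p^i}$ or work inside $\Omega_i$-type subgroups to transfer the class-$2$, exponent-$p^i$ information, which is routine given regularity of $p$-groups for $p \geq 7$ in the relevant class but would need to be spelled out. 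Everything else — the application of Proposition~\ref{pro:epsilon} and the arithmetic of the bounds — is immediate.
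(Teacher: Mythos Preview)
Your proposal is correct and matches the paper's approach exactly: the paper gives no detailed proof of this corollary, only the one-sentence remark that ``a result of Glauberman \cite{Gl08} allows us to deduce the existence of normal subgroups with similar properties,'' and your argument---apply Proposition~\ref{pro:epsilon} to obtain a class-$\leq 2$, exponent-$p^i$ subgroup of the required size, then feed it into Glauberman's theorem to obtain a normal one at the cost of truncation at $p^{\lfloor (2p+4)/3 \rfloor}$---is precisely the intended deduction. Your caveats about the exact form of Glauberman's statement (equality versus inequality, and whether it is stated for general $i$) are legitimate points of detail that the paper itself does not address.
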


\begin{cor} \label{cor-2} Let $G$ be a finite $p$-group. Let $p^k$ be
  the maximal order of a subgroup of $G$ of nilpotency class $2$ and
  exponent $p$, if $p$ is odd, nilpotency class $2$ and exponent $8$,
  if $p=2$.  Then $G$ can be generated by $k$ elements, i.e.\ $d(G)
  \leq k$.
\end{cor}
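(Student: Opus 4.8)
The plan is to run an induction on $\lvert G \rvert$, reducing to the $d$-maximal case and then applying Proposition~\ref{pro:epsilon} with the right value of $i$. First I would fix the word $w = x^p[y,z]$ when $p$ is odd, and $w = x^4[y,z]$ (i.e.\ $i=2$, giving exponent $p^{i+\epsilon} = 8$) when $p = 2$, so that in either case the subgroups produced by Proposition~\ref{pro:epsilon} have nilpotency class at most $2$ and exponent dividing $p$ (resp.\ $8$). By passing to a quotient and invoking Lemma~\ref{lem:properties}~(b), together with an easy induction on the order, I may assume $G$ is $w$-maximal; note that replacing $G$ by $G/N$ with $N \subseteq w(G)$ does not decrease $\lvert G : w(G)\rvert$ and cannot increase the maximal order of a subgroup of the prescribed type, so this reduction is harmless for proving $d(G) \leq k$ (since $d(G/\Phi(G)) = d(G)$ and $\Phi(G) \subseteq w(G)$ for these words).

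Next I would extract the key numerical inequality. For $w = x^p[y,z]$ with $p$ odd we have $w(G) = G^p[G,G] = \Phi(G)$, so $\lvert G : w(G)\rvert = p^{d(G)}$; similarly for $p=2$ and $w = x^4[y,z]$ one checks $w(G) \supseteq \Phi(G) = G^2[G,G]$, but in fact the relevant quantity is $\lvert G : G^{p^{i+\epsilon}}[G,G]\rvert$, which for a $d$-maximal $2$-group whose derived subgroup and relevant powers are controlled is again at least $p^{d(G)}$ — here I would use that $G$ being $w$-maximal forces (via Lemma~\ref{swap} and Theorem~\ref{thm:thompson}) the structural constraints $[G,G,G]=1$ and $[G^{p^{i+\epsilon}},G]=1$ exploited in the proof of Proposition~\ref{pro:epsilon}. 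Then Proposition~\ref{pro:epsilon} hands us a subgroup $H \leq G$ of nilpotency class at most $2$ and exponent $p^{i+\epsilon}$ (exponent $p$ for $p$ odd, exponent $8$ for $p=2$) with $\lvert H \rvert \geq \lvert G : G^{p^{i+\epsilon}}[G,G]\rvert \geq p^{d(G)}$. By the definition of $k$ as the maximal order of such a subgroup, $p^k \geq \lvert H \rvert \geq p^{d(G)}$, whence $d(G) \leq k$.

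The main obstacle I anticipate is the bookkeeping in the case $p = 2$: one must be careful that the exponent bound coming out of Proposition~\ref{pro:epsilon} is exactly $8 = 2^{i+\epsilon}$ with $i = 2$, $\epsilon = 1$, and that the comparison $\lvert G : G^{8}[G,G]\rvert \geq 2^{d(G)}$ holds — this requires that after the $w$-maximal reduction we genuinely have $\Phi(G) = G^2[G,G] \supseteq G^8[G,G]$ with the containment not collapsing the quotient too much, which is automatic since $G^8 \subseteq G^2$. For $p$ odd the argument is cleaner because $w(G) = \Phi(G)$ directly. A secondary point to handle carefully is that the reduction to $G$ being $w$-maximal must be compatible with \emph{both} sides of the desired inequality simultaneously: the inductive hypothesis gives $d(\bar G) \leq \bar k$ for the quotient, and one needs $d(G) = d(\bar G)$ (true since the kernel lies in $\Phi(G)$) together with $\bar k \leq k$ (true since a subgroup of $\bar G$ of the prescribed type lifts, or rather its preimage contains, a subgroup of $G$ of at least the same order with the same properties — or one simply argues directly without lifting, using that the quotient map sends a type-$(2,p)$ subgroup of $G$ onto one of $\bar G$). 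I would present the direct argument to avoid the lifting subtlety.
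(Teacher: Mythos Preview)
Your core argument is correct and is precisely the paper's one-line proof: apply Proposition~\ref{pro:epsilon} with $i=1$ for $p$ odd and $i=2$ for $p=2$ to obtain $H \leq G$ of class at most $2$ and exponent $p$ (respectively $8$) with
\[
\lvert H \rvert \;\geq\; \lvert G : G^{p^{i+\epsilon}}[G,G] \rvert \;\geq\; \lvert G : G^p[G,G] \rvert \;=\; p^{d(G)},
\]
whence $p^k \geq p^{d(G)}$ and $d(G) \leq k$.

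The entire reduction to the $w$-maximal case, however, is both unnecessary and, as written, defective. It is unnecessary because Proposition~\ref{pro:epsilon} is stated for an \emph{arbitrary} finite $p$-group; the passage to a $w$-maximal group happens inside its proof, not as a hypothesis you must arrange beforehand. It is defective for two reasons. First, Lemma~\ref{lem:properties}(b) says that quotients of $w$-maximal groups by $N \subseteq w(G)$ remain $w$-maximal; it does not let you pass from a non-$w$-maximal $G$ to a $w$-maximal quotient, so it cannot drive the reduction you describe. Second, and more seriously, the inequality $\bar k \leq k$ that your inductive step requires is not established by either of your suggestions: the preimage in $G$ of a class-$2$, exponent-$p$ subgroup of $\bar G = G/N$ need not itself have class $\leq 2$ or exponent $p$, and the observation that images of type-$(2,p)$ subgroups of $G$ are type-$(2,p)$ subgroups of $\bar G$ points the inequality the wrong way. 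In fact $\bar k > k$ is perfectly possible (killing $\gamma_3(G)$ can create large class-$2$ subgroups in the quotient that have no analogue in $G$).

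Delete the reduction and the ``secondary point'' paragraph; what remains is exactly the paper's proof.
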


\begin{proof}
  The claim follows from $d(G) = \log_p \lvert G:G^p[G,G] \rvert$ and
  Proposition~\ref{pro:epsilon}.
\end{proof}

\section{$d$-Maximal finite $p$-groups and $\Z_p$-Lie
  rings} \label{sec:Lie}

For any group $G$, let $d(G)$ denote the minimal number of
elements required to generate $G$, possibly $\infty$.  Throughout
this section let $p$ be a prime.  As noted in the introduction a
finite $p$-group $G$ is $d$-maximal, i.e.\ satisfies $d(H) < d(G)$
for all proper subgroups $H$ of $G$, if and only if it is
$w$-maximal for $w = x^p[y,z]$.  The following proposition (cf.\
\cite{Ka91}) is an easy consequence of Theorem~\ref{thm:thompson}.

\begin{pro} \label{pro:d-maximal} Let $p$ be an odd prime and let $G$
  be a $d$-maximal finite $p$-group.  Then $G/[G,G]$ and $[G,G]$ are
  elementary abelian $p$-groups.  If $\lvert G \rvert > p$, then $\lvert
  [G,G] \vert \leq p^{d(G) - 2}$.
\end{pro}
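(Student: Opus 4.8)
\medskip
\noindent\emph{Proof plan.} The plan is to extract the structure of $G$ from Theorem~\ref{thm:thompson} and then, for the numerical bound, to analyse the $p$-th power map on $G$. First I would note that, since $p$ is odd, $w = x^p[y,z]$ is a word of type~(ii) in Lemma~\ref{swap} (take $i=1$ and $k=2\le p-1$), hence interchangeable in $G$. As $G$ is $d$-maximal it is $w$-maximal for this $w$, so Theorem~\ref{thm:thompson} applies and gives $\Phi(G)=w(G)=G^p[G,G]\le\Zen(G)$. In particular $\gamma_3(G)=1$ and $[G^p,G]=1$, and in a group of class $2$ the latter yields $[x,y]^p=[x^p,y]=1$ for all $x,y\in G$; thus $[G,G]$ is elementary abelian. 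Moreover $G/[G,G]$ is again $d$-maximal by Lemma~\ref{lem:properties}(b), and an abelian $d$-maximal $p$-group must be elementary abelian --- otherwise it has a cyclic direct factor $C$ of order at least $p^2$, and replacing $C$ by its subgroup of index $p$ yields a maximal subgroup requiring the same number of generators. Hence $G/[G,G]$ is elementary abelian, so $G^p\le[G,G]$ and $\Phi(G)=[G,G]$.

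For the inequality I would set $d:=d(G)$ and $p^r:=\lvert[G,G]\rvert$. Since $\lvert G\rvert>p$, the group $G$ is not cyclic (a cyclic $p$-group of order $>p$ is not $d$-maximal), so $d\ge 2$; and $\lvert G\rvert=p^{d+r}$ because $\Phi(G)=[G,G]$. The key point is that the $p$-th power map $\pi\colon G\to G$, $g\mapsto g^p$, is a homomorphism: by the Hall--Petrescu formula \cite[Kapitel~III, Satz~9.4]{Hu67} in a group of class $2$ one has $(xy)^p=x^py^p[y,x]^{\binom p2}$, and since $p$ is odd $p\mid\binom p2$ while $[y,x]^p=1$, so $[y,x]^{\binom p2}=1$. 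Its kernel is $\Omega:=\{g\in G\mid g^p=1\}$, and $[G,G]\le\Omega$ as $[G,G]$ is elementary abelian; so $\pi$ factors through $G/[G,G]$, inducing a surjection $G/[G,G]\to G^p\le[G,G]$ with kernel $\Omega/[G,G]$. Now I would split into two cases. If $\Omega=[G,G]$, this surjection is an isomorphism, whence $p^d=\lvert G/[G,G]\rvert=\lvert G^p\rvert\le\lvert[G,G]\rvert=p^r$; but then $[G,G]$ is a proper elementary abelian subgroup of $G$ with $d([G,G])=r\ge d=d(G)$, contradicting $d$-maximality. Hence $\Omega\supsetneq[G,G]$: pick $g\in\Omega\setminus[G,G]$; since $[G,G]\le\Zen(G)$ and $\langle g\rangle$ has order $p$ with $\langle g\rangle\cap[G,G]=1$, the subgroup $A:=\langle g\rangle[G,G]$ is elementary abelian of order $p^{r+1}$, and it is proper because $d\ge 2$. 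Then $d$-maximality forces $r+1=d(A)<d(G)=d$, that is $\lvert[G,G]\rvert=p^r\le p^{d-2}$, as required.

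The structural part and the congruence $\binom p2\equiv 0\pmod p$ --- which is exactly where oddness of $p$ is used, and where the whole argument would fail for $p=2$ --- are routine. The only genuine idea is to bring in the subgroup $\Omega$ of elements of order dividing $p$: the elementary abelian subgroup $[G,G]$ alone already gives the weaker bound $r\le d-1$, and the improvement to $r\le d-2$ comes precisely from the extra generator available in $A=\langle g\rangle[G,G]$ once one knows that $\Omega$ strictly contains $[G,G]$.
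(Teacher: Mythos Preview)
Your proof is correct and follows essentially the same route as the paper's: the structural claims are obtained identically via Lemma~\ref{swap} and Theorem~\ref{thm:thompson}, and the numerical bound is obtained by using the $p$-th power homomorphism to locate an element of order $p$ outside $[G,G]$ and then adjoining it to $[G,G]$. The only cosmetic difference is that the paper first records the weak bound $\lvert[G,G]\rvert\le p^{d-1}$ and argues by contradiction from $\lvert[G,G]\rvert=p^{d-1}$, whereas you split into the cases $\Omega=[G,G]$ versus $\Omega\supsetneq[G,G]$; the content is the same.
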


\begin{proof}
  Since $G$ is $d$-maximal, $G/[G,G]$ is also $d$-maximal and
  therefore an elementary abelian $p$-group.  By Lemma~\ref{swap} and
  Theorem~\ref{thm:thompson}, one has $[G,G,G]=1$ and $[G,G]^p = [G^p,G] =
  1$.  Hence $[G,G]$ is an elementary abelian $p$-group.

  Suppose that $\lvert G \rvert > p$, and put $d := d(G)$.  Since $G$
  is $d$-maximal, we have $\lvert [G,G] \vert \leq p^{d-1}$.  For a
  contradiction, assume that $\lvert [G,G] \rvert = p^{d-1}$.  The map
  $\pi :G/[G,G] \to [G,G]$, $x \mapsto x^p$ is a homomorphism between
  elementary abelian $p$-groups.  Since $\lvert G/[G,G] \rvert >
  \lvert [G,G] \rvert$, the map $\pi$ is not injective.  Suppose that
  $1 \neq y \in \ker \pi$.  Then $y$ is an element of order $p$ in $G$
  and $y \not \in [G,G]$.  Put $H = \langle \{ y \} \cup [G,G]
  \rangle$. Then $H$ is an elementary abelian $p$-group of order $p^d$
  which is strictly contained in $G$, a contradiction.
\end{proof}

For $p$ odd, Theorem~\ref{thm:thompson} and
Proposition~\ref{pro:d-maximal} show that every $d$-maximal finite
$p$-group $G$ has nilpotency class at most $2$ and exponent $p^2$.
Conversely, finite $p$-groups of nilpotency class at most $2$ and
exponent $p$ need not be $d$-maximal; for instance, the Heisenberg
group over the finite field $\F_p$ is not $d$-maximal.
Interesting examples of $d$-maximal finite $p$-groups can be
constructed by Lie methods as follows.

The Lazard correspondence, which operates via functors $\exp$ and
$\log$, is a correspondence between finite $p$-groups of nilpotency
class smaller than $p$ and finite $\Z_p$-Lie rings of nilpotency class
smaller than $p$; see \cite[\S~10.2]{Kh73}.  Suppose that $p$ is odd,
and consider a finite $p$-group $G$ whose nilpotency class is bounded
by $2$.  Put $L := \log (G)$.  Then the group $G$ is $d$-maximal if
and only if $L$ is $d$-maximal, in the sense that for any Lie subring
$M$ of $L$ one has $\lvert M : pM + [M,M]_{\text{Lie}} \rvert < \lvert
L : pL + [L:L]_{\text{Lie}} \rvert$.

\begin{eje} \label{ej:example1} Let $L = \Span \langle x, y, z \mid px
  = py = p^2 z = 0 \rangle \cong C_p\times C_p \times C_{p^2}$ be and
  abelian $p$-group, and extend $[x,z]_{\text{Lie}} =
  [y,z]_\text{{Lie}} = 0$ and $[x,y]_{\text{Lie}} = pz$ bi-additively
  to $L$.  Then $(L,+,[,]_{\text{Lie}})$ is a finite $d$-maximal
  $\Z_p$-Lie ring.  The finite $p$-group $G = \exp (L)$ is $d$-maximal
  such that $d(G) = 3$ and $\lvert [G,G] \rvert = p^2$.
\end{eje}

We continue to work under the hypothesis that $p>2$ and consider a
finite $p$-group $G$ of nilpotency class $2$ and of exponent $p$.
Then $L := \log(G)$ is an $\F_p$-Lie algebra and, as an
$\F_p$-vector space, $L$ decomposes as $L = V \oplus
[L,L]_{\text{Lie}}$.  Put $k := \dim_{\F_p} [L,L]_{\text{Lie}}$.
Since $[L,L]_{\text{Lie}} \subseteq \Zen(L)$, the Lie product on
$L$ is determined by its restriction to $V \times V$, which can be
written as
\begin{equation}
  V \times V \to [L,L]_{\text{Lie}}, \quad
  [v,w]_{\text{Lie}} = f_1(v,w)z_1 + \ldots + f_k(v,w)z_k,
\end{equation}
where $\{ z_1,\ldots ,z_k\}$ is an $\F_p$-basis of
$[L,L]_{\text{Lie}}$ and $f_i$, $1 \leq i \leq k$ , is a collection of
antisymmetric bilinear forms on the vector space $V$.

In order to check whether the $p$-group $G$ is $d$-maximal, it is
enough to check whether the $\F_p$-Lie algebra $L$ is $d$-maximal.
Clearly, for this is suffices to check whether for any Lie subalgebra
$M$ of $L$ containing $[L,L]_{\text{Lie}}$ one has $\lvert M :
[M,M]_{\text{Lie}} \rvert < \lvert L : [L,L]_{\text{Lie}} \rvert$.
Equivalently, one needs to test whether for any proper subspace $W$ of
$V$,
\begin{equation}
  \dim (W) + k - \dim_{\F_p} \left( \Span \langle \sum_{i=1}^k f_i(v,w) z_i
    \mid v, w\in W \rangle \right) < \dim_{\F_p} V.
\end{equation}
One can easily compute the dimension of $\Span \langle
\sum_{i=1}^kf_i(v,w) z_i \mid v, w \in W \rangle$ by studying the
space generated by the antisymmetric bilinear forms $f_1,\ldots ,f_k$
in the exterior algebra $W \wedge W$.

\begin{lem}\label{lem:wedge}
  Let $W$ be an $\F_p$-vector space, let $f_1, \ldots, f_k$ be a
  collection of antisymmetric bilinear forms on $W$, and let $\{z_1,
  \ldots, z_k \}$ be a basis of $\F_p^k$.  Let $U$ denote the vector
  subspace generated by $f_1, \ldots, f_k$ in the exterior algebra $W
  \wedge W$. Then
  \begin{equation}
    \dim_{\F_p} U = \dim_{\F_p} \left( \Span \langle \sum_{i=1}^k
      f_i(v,w) z_i \mid v,w \in W \rangle \right).
  \end{equation}
\end{lem}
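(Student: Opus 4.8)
The plan is to exhibit an explicit isomorphism (or at least a dimension-preserving correspondence) between the subspace $U \leq W \wedge W$ and the subspace $S := \Span\langle \sum_{i=1}^k f_i(v,w) z_i \mid v,w \in W\rangle \leq \F_p^k$, by dualising. First I would recall that an antisymmetric bilinear form $f$ on $W$ is the same datum as a linear functional on $W \wedge W$: namely $f$ corresponds to the map $\lambda_f \colon W \wedge W \to \F_p$ determined by $v \wedge w \mapsto f(v,w)$, and this assignment $f \mapsto \lambda_f$ is a linear isomorphism from the space of antisymmetric bilinear forms on $W$ to $(W \wedge W)^*$. Here there is one technical point to flag, since the paper works over $\F_p$: in characteristic $2$ ``antisymmetric'' should be read as ``alternating'' (i.e.\ $f(v,v)=0$), and one must make sure the exterior algebra $W \wedge W$ is the one built from the alternating tensors so that the correspondence with $(W\wedge W)^*$ is genuinely bijective; with that convention the identification is clean in all characteristics.

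Next I would set up the pairing that links the two sides. Consider the bilinear map
\begin{equation*}
  \beta \colon (W \wedge W) \times \F_p^k \to \F_p, \qquad
  \beta\Big(\eta, \sum_{i=1}^k a_i z_i\Big) = \sum_{i=1}^k a_i \,\lambda_{f_i}(\eta).
\end{equation*}
The key observation is twofold. On the one hand, fixing $\eta$ and letting the second argument range over $\F_p^k$, the element $\eta$ lies in the ``left radical'' of $\beta$ if and only if $\lambda_{f_i}(\eta)=0$ for all $i$, i.e.\ if and only if $\eta$ is orthogonal to $U$ (since the $f_i$, equivalently the $\lambda_{f_i}$, span $U$ inside $(W\wedge W)^* $ — more precisely $U$ is the span of the $f_i$ in the forms picture, which matches the span of the $\lambda_{f_i}$ in $(W\wedge W)^*$). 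On the other hand, fixing $s = \sum a_i z_i$ and letting $\eta = v \wedge w$ vary, we get $\beta(v \wedge w, s) = \sum_i a_i f_i(v,w)$, so $s$ lies in the ``right radical'' of $\beta$ (restricted to decomposable elements $v\wedge w$, which already span $W\wedge W$) if and only if $\sum_i a_i f_i(v,w) = 0$ for all $v,w$. I would then identify $S$: an element $\sum_i f_i(v,w) z_i$ of $S$ is, for each fixed pair $(v,w)$, the image of $v\wedge w$ under the linear map $\Phi \colon W\wedge W \to \F_p^k$, $\eta \mapsto (\lambda_{f_1}(\eta),\dots,\lambda_{f_k}(\eta))$; hence $S = \operatorname{im}\Phi$, and $\operatorname{rank}\Phi = \dim_{\F_p} S$.

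Finally I would finish by a rank computation. The map $\Phi \colon W \wedge W \to \F_p^k$ above has, by definition, $\ker \Phi = \{\eta : \lambda_{f_i}(\eta) = 0 \text{ for all } i\} = U^{\perp}$, the annihilator of $U$ inside $W\wedge W$ under the perfect pairing between $W\wedge W$ and $(W\wedge W)^*$. Therefore
\begin{equation*}
  \dim_{\F_p} S = \dim_{\F_p} \operatorname{im}\Phi
  = \dim_{\F_p}(W\wedge W) - \dim_{\F_p} U^{\perp}
  = \dim_{\F_p} U,
\end{equation*}
using $\dim U^{\perp} = \dim(W\wedge W) - \dim U$. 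This is exactly the claimed equality. The main obstacle — really the only nonroutine point — is the characteristic-$2$ bookkeeping in the first paragraph: making sure that ``antisymmetric bilinear forms on $W$'' and ``linear functionals on $W\wedge W$'' are set up as genuinely dual objects (using alternating forms and the alternating exterior square), so that the span $U$ of the $f_i$ corresponds correctly under the identification and the annihilator argument goes through verbatim. Once that identification is pinned down, the rest is the standard rank–nullity statement for the evaluation map $\Phi$.
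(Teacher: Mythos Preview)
Your argument is correct and is essentially the same as the paper's, just phrased more elaborately. The paper compresses everything into one line: for $\boldsymbol{\lambda}=(\lambda_1,\dots,\lambda_k)$, the relation $\sum_i \lambda_i f_i = 0$ holds in $W\wedge W$ (equivalently in $(W\wedge W)^*$) if and only if the functional $\sum x_i z_i \mapsto \sum \lambda_i x_i$ annihilates every vector $\sum_i f_i(v,w)z_i$; hence the space of linear dependencies among the $f_i$ coincides with the annihilator of $S$, so both spans have dimension $k$ minus this common quantity. Your rank--nullity computation for $\Phi$ is the dual formulation of the same observation (your $\ker\Phi = U^\perp$ is the transpose of their ``relations $=S^\perp$''), and your extra bilinear map $\beta$ is not really needed once $\Phi$ is introduced. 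The characteristic-$2$ caveat you raise is a fair point about the statement rather than the proof, but note that in the paper's applications $p$ is odd, so the issue does not arise.
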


\begin{proof}
  For $\boldsymbol{\lambda} := (\lambda_1, \ldots, \lambda_k) \in
  \F_p^k$ define $\Phi_{\boldsymbol{\lambda}} : \F_p^k \to \F_p$,
  $\sum_{i=1}^k x_i z_i \mapsto \sum_{i=1}^k \lambda_i x_i$.  Then
  $\sum_{i=1}^k \lambda_i f_i = 0$ is a linear dependency relation in
  $W \wedge W$ if and only if for all $v,w \in W$,
  $\Phi_{\boldsymbol{\lambda}}(\sum_{i=1}^k f_i(v,w) z_i) = 0$.
\end{proof}

\begin{eje} \label{ej:example2} Suppose that $p$ is odd.  Let $V =
  \Span \langle e_1, \ldots, e_4 \rangle$ be the standard
  $4$-dimensional $\F_p$-vector space and consider the antisymmetric
  bilinear forms $f_1$ and $f_2$ on $V$, represented by the matrices
  \begin{equation}
    F_1 =
    \begin{pmatrix}
      0 & 1& 0 &0 \\
      -1 & 0& 0 &0 \\
      0 & 0& 0 &1 \\
      0 & 0& -1 &0
    \end{pmatrix}
    \quad \text{and} \quad
    F_2 =
    \begin{pmatrix}
      0 & 0& 0 &1 \\
      0 & 0& 1 &0 \\
      0 & -1& 0 &1 \\
      -1 & 0& -1 &0
    \end{pmatrix},
  \end{equation}
  with respect to the basis $(e_1, \ldots, e_4)$.  Then the
  $6$-dimensional $\F_p$-Lie algebra $L = V \oplus \Span \langle
  z_1,z_2 \rangle$, defined by $[v,w]_{\text{Lie}} = f_1(v,w) z_1 +
  f_2(v,w)z_2$, is of nilpotency class $2$.  Based on
  Lemma~\ref{lem:wedge}, a short computation shows that $L$ is
  $d$-maximal.  The finite $p$-group $G = \exp (L)$ is $d$-maximal
  such that $d(G) = 4$ and $\lvert [G,G] \rvert = p^2$.
\end{eje}

Examples \ref{ej:example1} and \ref{ej:example2} suggest that,
perhaps, the bound in Proposition \ref{pro:d-maximal} is the best
possible.  We record

\begin{que}
  Does there exist for every integer $k>2$ a $d$-maximal finite
  $p$-group $G$ such that $\lvert G : \Phi (G) \rvert = p^k$ and
  $\lvert [G,G] \rvert = p^{k-2}$?
\end{que}

\section{Hereditarily $w$-maximal groups} \label{sec:hereditarily_w}

Let $w = w(\mathbf{x})$ be a fixed word, i.e.\ an element of the free
group $F = \langle x_1,\ldots,x_n \rangle$ on $n$ generators.  We say
that a group $G$ is \emph{hereditarily $w$-maximal} if all subgroups
of $G$ are $w$-maximal.

\begin{pro}\label{pro:vanishes}
  Let $w$ be an element of the free group $F = \langle x_1,\ldots,x_n
  \rangle$ and let $G$ be a hereditarily $w$-maximal finite group.
  \begin{enumerate}
  \item[\textup{(1)}] If $G$ is a $p$-group, then $w (G) = 1$.
  \item[\textup{(2)}] If $w$ is equal to $x^m$ or $\gamma_k = [x_1,
    \ldots, x_k]$, for some $m,k \in \N$, then $w (G)=1$
  \end{enumerate}
\end{pro}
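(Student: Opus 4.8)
The plan is to handle the two parts separately, but both via a minimal-counterexample / induction-on-order argument combined with the observation that the hypothesis is inherited by subgroups (so every subgroup of $G$ is again hereditarily $w$-maximal).

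For part (1), let $G$ be a hereditarily $w$-maximal finite $p$-group and suppose for contradiction that $w(G) \neq 1$. Pick a maximal subgroup $M$ of $G$; then $|G:M| = p$ and, since $G$ is $w$-maximal, $|G:w(G)| > |M:w(M)|$. On the other hand $w(M) \leq w(G)$, so $|M:w(M)| \geq |M : w(M)\cap M| \cdot \text{(correction)}$ — more carefully, from $w(M) \le w(G)$ we get $|w(G):w(M)| \ge 1$ and the index inequality becomes $|G:w(G)| > |M:w(M)| = \frac{|M|}{|w(M)|} = \frac{|G|/p}{|w(M)|}$. Comparing with $|G:w(G)| = |G|/|w(G)|$ yields $|w(G)| < p\,|w(M)|$, i.e.\ $|w(G):w(M)| < p$, hence $w(G) = w(M)$ for every maximal subgroup $M$. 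Since $\Phi(G) = \bigcap M$ and $G$ is a finite $p$-group, iterating (or intersecting) forces $w(G) \le \Phi(G)$ and in fact $w(G) = w(N)$ for suitable small $N$; pushing this down a composition-type chain of subgroups of $p$-power index until one reaches a subgroup where $w$ can no longer ``absorb'' a factor of $p$ produces the contradiction $w(G) = 1$. The key structural input is that in a $p$-group one can always descend one maximal subgroup at a time, so the index drop in $|H:w(H)|$ required by $w$-maximality is at least $p$ at each step, whereas $|H|$ only drops by $p$ — forcing $w(H)$ to stay put, which is absurd once $|H|$ is small.

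For part (2), first reduce to the $p$-group case using Lemma~\ref{lem:properties} if available, or directly: if $w = x^m$ or $w = \gamma_k$, then $w(G) = 1$ is equivalent to a statement that can be checked prime by prime. For $w = \gamma_k$: $\gamma_k(G) = 1$ means $G$ is nilpotent of class $< k$; since every Sylow subgroup $P$ of $G$ is hereditarily $\gamma_k$-maximal, part (1) gives $\gamma_k(P) = 1$, and if we also know $G$ is nilpotent we conclude $\gamma_k(G) = 1$. So the real point is to show a hereditarily $\gamma_k$-maximal (or $x^m$-maximal) finite group is nilpotent; this should follow by taking a minimal counterexample, using that every proper subgroup is hereditarily $w$-maximal hence (by induction) has $w$-subgroup trivial, and exploiting solvability/minimal normal subgroup arguments — a non-nilpotent group of smallest order that is ``locally $w$-trivial'' on proper subgroups must itself have $w(G)=1$ forced by the $w$-maximality of $G$ together with $w(M)=1$ for all maximal $M$. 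For $w = x^m$: $x^m(G) = G^m = 1$ means $G$ has exponent dividing $m$; again pass to Sylow subgroups (part (1) handles each), then note a finite group all of whose Sylow subgroups have exponent dividing the relevant $p$-part of $m$ need not have exponent $m$ globally unless $G$ is nilpotent — so once more the crux is nilpotency.

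The main obstacle, then, is the descent argument in part (1) — making precise that $w(G) = w(M)$ for all maximal $M$ genuinely forces $w(G) = 1$, rather than merely $w(G) \le \Phi(G)$ — and, in part (2), proving the nilpotency of hereditarily $w$-maximal finite groups for these specific $w$. For the latter I expect to invoke that a hereditarily $w$-maximal group has every subgroup $w$-maximal, apply the inductive hypothesis to all proper subgroups (so $w$ vanishes on them), and then argue that a finite group which is generated by proper subgroups on each of which $w$ is trivial, and which is itself $w$-maximal, cannot have $w(G) \ne 1$: if $w(G) \ne 1$ then $G$ is the only subgroup with nontrivial $w$-subgroup, so every maximal subgroup $M$ has $w(M) = 1$ and hence $|M:w(M)| = |M| \ge |G:w(G)| = |G|/|w(G)|$, giving $|w(G)| \ge |G:M| = $ (a prime $q$); combined with $w(G)$ being normal and generated by $w$-values, a short argument on the structure of $G/w(G)$ (which is again hereditarily $w$-maximal with $w$ trivial everywhere) closes the case. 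I would write the $p$-group case in full first, then bootstrap the general case from it.
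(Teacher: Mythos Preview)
Your proposal has the right overall shape but contains genuine gaps and misses the simple arguments that finish each part.

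For part~(1), your ``obstacle'' is illusory. The clean route is straight induction on $\lvert G\rvert$: a maximal subgroup $M$ is itself hereditarily $w$-maximal, so by induction $w(M)=1$, whence $\lvert M:w(M)\rvert=\lvert M\rvert=\lvert G\rvert/p$, and $w$-maximality of $G$ gives $\lvert G:w(G)\rvert>\lvert G\rvert/p$, i.e.\ $\lvert w(G)\rvert<p$, so $w(G)=1$. Your computation that $w(G)=w(M)$ is correct, and iterating it down a maximal chain $G=M_0>M_1>\cdots>M_r=1$ does yield $w(G)=w(1)=1$; but you never state this cleanly and instead flag it as unresolved.

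For $w=x^m$ in part~(2), your detour through Sylow subgroups and nilpotency is both unnecessary and mis-diagnosed. The one-line argument is: for each $g\in G$ the cyclic group $\langle g\rangle$ is $w$-maximal, and a cyclic group $C$ with $\lvert C:C^m\rvert>\lvert D:D^m\rvert$ for every proper $D<C$ must satisfy $C^m=1$; hence $g^m=1$ for all $g$. (Incidentally, your stated worry is also wrong: the exponent of any finite group equals the product of the exponents of its Sylow subgroups, nilpotent or not, since an element of order $n$ has a power of order $p^{v_p(n)}$ lying in some Sylow $p$-subgroup.)

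For $w=\gamma_k$, your sketch is too vague at the crucial point. After reducing by induction to the case where every proper subgroup of $G$ is nilpotent of class $<k$, you need a structural input: Iwasawa's theorem that a finite non-nilpotent group all of whose proper subgroups are nilpotent has the form $C\ltimes Q$ with $C$ a cyclic $p$-group and $Q$ a $q$-group. With this in hand one splits into two cases. If $[G,G]\lneqq Q$, there exist maximal subgroups of indices $p$ and $q$, each with $\gamma_k$ trivial, and $w$-maximality forces $\lvert\gamma_k(G)\rvert<p$ and $\lvert\gamma_k(G)\rvert<q$, so $\gamma_k(G)=1$. If $[G,G]=Q$, then $\gamma_k(G)=Q$ and $\lvert G:\gamma_k(G)\rvert=\lvert C\rvert=\lvert C:\gamma_k(C)\rvert$, contradicting $w$-maximality. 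Your ``short argument on the structure of $G/w(G)$'' does not substitute for this: without Iwasawa you have no handle on the maximal subgroups of the minimal counterexample.
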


\begin{proof}
  (1) We argue by induction on the order of $G$.  Suppose that $G$ is
  a non-trivial $p$-group.  Let $H$ be a maximal subgroup of $G$,
  i.e.\ a subgroup of index $p$ in $G$.  Since $H$ is hereditarily
  $w$-maximal, induction shows that $w(H) = 1$.  From $\lvert G : w(G)
  \rvert > \lvert H : w(H) \rvert = \lvert H \rvert$, we conclude that
  $w(G) = 1$.

  (2) If $w = x^m$ for some $m \in \N$, then for every $g \in G$, the
  cyclic subgroup $\langle g \rangle$ is $x^m$-maximal, and hence $g^m
  = 1$.

  Next suppose that $w = \gamma_k = [x_1, \ldots, x_k]$ for some $k
  \in \N$. If $G$ is nilpotent, then it is a direct product of its
  Sylow $p$-subgroups and the claim follows from (1).  For a
  contradiction, assume that $G$ is not nilpotent.  By induction, we
  may assume that all proper subgroups of $G$ are nilpotent of class
  at most $k-1$.  By a classical result of Iwasawa (see \cite{Iw41}),
  $G \cong C \ltimes Q$ where $C$ is a cyclic $p$-group and $Q$ is a
  $q$-group with $p$ and $q$ prime.  There are two cases.

  \noindent \textit{Case 1: $[G,G] \lneqq Q$.} Then there exist
  subgroups $H_1$ and $H_2$ of index $p$ and $q$, and these are
  nilpotent of class at most $k-1$.  Since $\lvert G : \gamma_k(G)
  \rvert > \lvert H_i : \gamma_k(H_i) \rvert = \lvert H_i \rvert$ for
  $i \in \{1,2\}$, we conclude that $\gamma_k(G) = 1$.

  \noindent \textit{Case 2: $[G,G] = Q$.}  In this case, since $C$ is
  cyclic, $[G,G]=[G,Q]=Q$. Therefore $\gamma_k(G) = Q$ and $\lvert
  G:\gamma_k(G) \rvert = \lvert G:Q \rvert = \lvert C \rvert = \lvert
  C : \gamma_k(C) \rvert$, a contradiction.
\end{proof}

The following example shows that there is no direct analogue of
Proposition~\ref{pro:vanishes} for the second commutator word $w =
[[x_1,x_2],[y_1,y_2]]$.

\begin{eje}\label{eje:quaternion}
  Consider the quaternion group $Q_8 = \{\pm 1, \pm i ,\pm j ,\pm k\}$
  of order $8$ and a cyclic group $C_3 = \langle \alpha \rangle$ of
  order $3$.  Take the semidirect product $G = C_3 \ltimes Q_8$ with
  respect to the natural action, given by $i^\alpha =j$, $j^\alpha =k$
  and $k^\alpha =i$.  We have $G^{(2)} = [[G,G],[G,G]] = \{\pm 1\}$.
  This shows that $\lvert G:G^{(2)} \rvert = 12$, and since $G$ has no
  proper subgroup of order larger than or equal to $12$, the group $G$
  is $[[x_1,x_2],[y_1,y_2]]$-maximal.  But $G$ is not metabelian.
\end{eje}

We show that the special example~\ref{eje:quaternion} generalises to
higher commutator words.  The \emph{standard derived words} are
defined recursively as
$$
\delta_1 := [x,y] \quad \text{and} \quad \delta_{k+1} :=
[\delta_k(x_1, \ldots, x_{2^k}), \delta_k(y_1, \ldots, y_{2^k})]
\text{ for $k \in \N$.}
$$
Accordingly, a group $G$ is soluble of derived length at most $k$ if
and only if $\delta_k(G) = 1$.

\begin{teo}\label{thm:example}
  Let $k \in \N$.  Then there exists a finite group $G$ which is
  $\delta_{k+1}$-maximal, soluble of derived length $k+2$, but
  satisfies $\delta_{k+1}(G) \not = 1$.
\end{teo}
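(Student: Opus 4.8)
The plan is to generalise Example~\ref{eje:quaternion} by iterating the same construction. Example~\ref{eje:quaternion} exhibits a group $G_1 = C_3 \ltimes Q_8$ which is $\delta_2$-maximal (since $\delta_1 = [x,y]$ and $\delta_2 = [[x_1,x_2],[y_1,y_2]]$, so the $k=1$ case of the theorem is precisely that example), soluble of derived length $3 = k+2$, with $\delta_2(G_1) = \{\pm 1\} \neq 1$. The essential features that make it work are: first, $G_1$ has a small derived subgroup $Q_8$ on which the $C_3$ acts transitively on the three ``axes'', so that $[G_1,G_1] = [G_1,Q_8]$; second, $\delta_2(G_1)$ is the tiny central subgroup $\{\pm 1\}$; and third, every proper subgroup of $G_1$ is strictly smaller than $\lvert G_1 : \delta_2(G_1) \rvert = 12$ in such a way that the $\delta_2$-maximality inequality holds. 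The idea for general $k$ is to find a group $G$ whose $k$th derived subgroup $\delta_k(G)$ is (isomorphic to) $Q_8$ with a $C_3$ acting on it as in the example, so that $\delta_{k+1}(G) = \{\pm 1\}$, while the whole group is built up by a tower of extensions that are ``rigid'' enough to force $\delta_{k+1}$-maximality.

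Concretely, I would proceed by induction on $k$, or better, by an explicit iterated construction. Start with $W_2 := C_3 \ltimes Q_8$ as in Example~\ref{eje:quaternion}. The key structural fact one wants is that this group is generated in a ``tight'' way: one needs a group that has $W_2$ as a proper quotient (or as its own derived subgroup inside a larger group) in such a way that the new top layer cannot be trimmed by passing to a proper subgroup. A natural candidate is an iterated wreath-type or iterated semidirect construction: at each stage form $G_{j+1} := A_{j+1} \ltimes G_j^{(r)}$ (a suitable diagonal or permutation action of a new abelian piece $A_{j+1}$ on several copies of $G_j$), chosen so that $\delta_{j+1}(G_{j+1}) = \delta_j(G_j)$ times a new nonabelian-on-top contribution, i.e.\ so that the derived series simply ``shifts by one'' and the bottom of the series remains the fixed $\{\pm 1\}$ coming from $Q_8$. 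The derived length increases by exactly $1$ at each step, giving derived length $k+2$ after $k$ steps, while $\delta_{k+1}(G_k) = \{\pm 1\} \neq 1$ by construction.

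The crux is verifying $\delta_{k+1}$-maximality: one must check that for every proper subgroup $H \leq G$ one has $\lvert H : \delta_{k+1}(H) \rvert < \lvert G : \delta_{k+1}(G) \rvert$. The clean way to organise this is to arrange the construction so that $\delta_{k+1}(G)$ is as small as possible (order $2$, equal to $Z(Q_8)$) and so that $G$ itself has no proper subgroup $H$ with $\lvert H \rvert \geq \lvert G : \delta_{k+1}(G) \rvert = \tfrac{1}{2}\lvert G\rvert$ \emph{except} those $H$ for which $\delta_{k+1}(H)$ is also nontrivial and of the same order. In other words: every subgroup of index $2$ in $G$ should still contain the central involution and still have nontrivial $(k+1)$st derived subgroup, while every subgroup of index $\geq 3$ is too small to violate the inequality regardless of its $\delta_{k+1}$. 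This reduces the problem to a finite check on maximal subgroups at each stage — which, because the construction is explicit and the action of each $A_{j+1}$ is transitive on its orbit of copies, amounts to the same bookkeeping as in Example~\ref{eje:quaternion}. I expect this index-$2$ (resp.\ maximal-subgroup) analysis to be the main obstacle: one must ensure the new abelian top layers $A_{j+1}$ are genuinely ``needed'', i.e.\ that deleting part of them costs more in the numerator $\lvert H\rvert$ than it saves in the denominator $\lvert \delta_{k+1}(H)\rvert$, and this is exactly where the choice of transitive action (forcing $[H,H]$ to still be ``large'' whenever $H$ surjects onto enough of the base) does the work. Once the construction and this single inequality are pinned down, solubility of derived length $k+2$ and $\delta_{k+1}(G)\neq 1$ are immediate from the shift structure of the derived series.
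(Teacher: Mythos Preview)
This is a plan, not a proof: you never specify $A_{j+1}$, the multiplicity $r$, or the action, and you explicitly leave the maximality verification as an unresolved ``main obstacle''.  The difficulty is real.  The most direct instantiation already fails: if one takes $G_2 = C_2 \ltimes G_1$ with the $C_2$ inverting the $C_3$-factor (so that $[G_2,G_2]=G_1$ and $\delta_3(G_2)=\delta_2(G_1)\cong C_2$), then the proper subgroup $G_1$ of index $2$ has $\delta_3(G_1)=1$ and hence $\lvert G_1:\delta_3(G_1)\rvert = 24 = \lvert G_2:\delta_3(G_2)\rvert$; thus $G_2$ is not $\delta_3$-maximal.  More generally, any tower built on top of $G_1$ will contain copies of $G_1$ (or of earlier $G_j$) whose derived series is one step too short, and you give no mechanism to prevent these from violating the inequality.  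Your heuristic ``subgroups of index $\geq 3$ are too small regardless'' is exactly what breaks: the dangerous proper subgroups are the large ones whose $(k+1)$st derived subgroup collapses to $1$.

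The paper avoids all of this by a single-step construction of an entirely different flavour.  It chooses primes $2^k<p<q$, lets $N$ be the finite $p$-group obtained via the Lazard correspondence from a suitable codimension-one quotient of the free nilpotent $\F_p$-Lie algebra of class $2^k$ on $2m$ generators (where $m$ is the order of $p$ modulo $q$), and sets $G = C_q \ltimes N$.  The $C_q$-action on the abelianisation $V\cong\F_p^{2m}$ is fixed-point free and, crucially, has no invariant hyperplane.  One then gets $[G,G]=N$, hence $\delta_{k+1}(G)=\delta_k(N)\cong C_p$, and the maximality check is clean: any $H\leq G$ with $\lvert H\rvert\geq\lvert G\rvert/p$ must satisfy $HN=G$ (since $q>p$), so $\lvert N:H\cap N\rvert\leq p$; the absence of $C_q$-invariant hyperplanes in $V$ then forces $H\cap N=N$, i.e.\ $H=G$.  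The Lie-theoretic setup is precisely what makes both the derived-length computation and this rigidity step tractable, and it is what your outline is missing.
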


\begin{proof}
  Let $p,q$ be prime numbers with $2^k < p < q$, and let $m$ be the
  order of $p$ in $(\Z/q\Z)^*$.  Then $\F_p(\zeta) \cong \F_{p^m}$,
  where $\zeta$ is a $q$th root of unity.

  Let $L$ be the free nilpotent Lie algebra of nilpotency class $2^k$
  over $\F_p$ on $d := 2m$ generators $x_1, \ldots, x_d$.  Recall that
  the derived series of $L$ is a subseries of the lower central
  series: $\delta_j(L) = \gamma_{2^j}(L)$ for all $j \in \N_0$.  Hence
  $L$ has derived length $k+1$, with $\delta_k(L) = \gamma_{2^k}(L)
  \not = 0$ central in $L$.

  Let $V := \F_p x_1 + \ldots + \F_p x_d$ so that $L = V \oplus
  \gamma_2(L)$ as an $\F_p$-vector space.  Write $V = V_1 \oplus
  V_{-1}$ with $V_i \cong \F_p(\zeta)$ for $i \in \{1,-1\}$ and choose
  $z_V \in \textup{GL}(V)$ of prime order $q$, acting on $V_i$ as
  multiplication by $\zeta^i$ for $i \in \{1,-1\}$.  Since $L$ is
  free, any vector space automorphism of $V$ lifts uniquely to a Lie
  algebra automorphism of $L$.  We denote the lift of $z_V$ to
  $\textup{Aut}(L)$ by $z_L$.  Clearly, $\zeta$ and $\zeta^{-1}$ are
  among the eigenvalues of the automorphism $z_V$ of $V$.  For later
  use we observe that the $\F_p \langle z_V \rangle$-module $V$ is
  completely reducible with irreducible submodules $V_1$ and $V_{-1}$.
  In particular, $V$ does not admit any $z_V$-invariant subspaces of
  co-dimension $1$.

  Clearly, $z_L$ acts on the $\F_p$-vector space $\delta_k(L)$ and we
  denote the restriction of $z_L$ to $\delta_k(L) = \gamma_{2^k}(L)$
  by $z_{\delta_k(L)}$.  The eigenvalues of $z_{\delta_k(L)}$ are
  products of length $2^k$ in the eigenvalues of $z_V$; among the
  latter are $\zeta$ and $\zeta^{-1}$.  This shows that $1$ is an
  eigenvalue of $z_{\delta_k(L)}$.  Since the $\F_p \langle
  z_{\delta_k(L)} \rangle$-module $\delta_k(L)$ is completely
  reducible, we find a subalgebra $Z$ of co-dimension $1$ in
  $\delta_k(L)$, which is $z_L$-invariant.  Since $\delta_k(L)$ lies
  in the centre of $L$, the subalgebra $Z$ is, in fact, an ideal of $L$.

  As $p > 2^k$, Lazard's correspondence yields a finite $p$-group $N
  := \exp(L/Z)$ (of exponent $p$) with a natural action of $\langle z
  \rangle \cong C_q$ on $N$, where $z = z_L$.  Under this
  correspondence, $V \cong (V+Z)/Z$ is isomorphically mapped to a
  complement of $\gamma_2(N)$ in $N$; we denote this complement also
  by $V$ so that $N = V \ltimes \gamma_2(N)$.  We claim that the
  semidirect product $G := \langle z \rangle \ltimes N$ is
  $\delta_{k+1}$-maximal, while $\delta_{k+1}(G) \not = 1$.

  Indeed, since $z_V$ does not admit $1$ as an eigenvalue, we have $N
  \supseteq \delta_1(G) \supseteq [V,\alpha] = V$.  As $N = \langle V
  \rangle$, it follows that $\delta_1(G) = N$, and hence
  $\delta_{k+1}(G) = \delta_k(N)$, which is associated under the Lazard
  correspondence to $\delta_k(L/Z)$, is a cyclic group of order $p$
  and hence non-trivial.  This implies that $\lvert G : \delta_{k+1}(G)
  \rvert = \lvert G:N \rvert \lvert N:\delta_k(N) \lvert = \lvert G \rvert /
  p$.

  Now suppose that $H$ is a subgroup of $G$ with $\lvert H :
  \delta_{k+1}(H) \rvert \geq \lvert G : \delta_{k+1}(G) \rvert$.  A
  fortiori we have $\lvert H \rvert \geq \lvert G \rvert / p$.  Since
  $q > p$ is prime, this implies that $H \not \subseteq N$ and $HN =
  G$.  Then $\lvert N : H \cap N \rvert \leq p$, and consequently $(H
  \cap N)\gamma_2(N)$ has index at most $p$ in $N/\gamma_2(N) \cong
  V$.  Since $V$ does not admit any $z$-invariant subspaces of
  co-dimension $1$, we must have $(H \cap N)\gamma_2(N) = N$.  This
  implies that $H \cap N = N$, and $H = G$.  Thus $G$ is
  $\delta_{k+1}$-maximal.
\end{proof}

We finish this section by proving that, for many words $w$, finite
groups which are hereditarily $w$-maximal are necessarily soluble.  In
particular, the proposition shows that a hereditarily
$\delta_k$-maximal groups are solvable.

\begin{pro}
  Let $w$ be an element of the free group $F = \langle x_1,\ldots,x_n
  \rangle$ and let $G$ be a hereditarily $w$-maximal finite group.  Then
  $w$ vanishes on every composition factor of~$G$.
\end{pro}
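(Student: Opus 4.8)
The plan is to induct on $|G|$, peeling off a simple quotient, so that the whole statement reduces to a single assertion about simple quotients of hereditarily $w$-maximal groups.

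Let $G$ be a nontrivial hereditarily $w$-maximal finite group and pick a maximal normal subgroup $N \trianglelefteq G$, so that $G/N$ is simple. By the Jordan--H\"older theorem the composition factors of $G$ are precisely those of $N$ together with $G/N$. Every subgroup of $N$ is a subgroup of $G$, hence $w$-maximal, so $N$ is itself hereditarily $w$-maximal; since $|N| < |G|$, the inductive hypothesis shows that $w$ vanishes on all composition factors of $N$. It therefore suffices to prove that $w(G/N) = 1$.

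To see this, assume $w(G/N) \neq 1$. The verbal subgroup $w(G)$ is fully invariant, hence normal, in $G$, and $w(G/N) = w(G)N/N$; since $G/N$ is simple, $w(G)N/N \neq 1$ forces $w(G)N = G$. By the second isomorphism theorem $|G : w(G)| = |N : N \cap w(G)|$. On the other hand $w(N) \leq w(G)$ (each $w$-value in $N$ is a $w$-value in $G$) and $w(N) \leq N$, so $w(N) \leq N \cap w(G)$ and hence $|N : w(N)| \geq |N : N \cap w(G)| = |G : w(G)|$. But $N$ is a proper subgroup of $G$ because $G/N \neq 1$, and $G$ is $w$-maximal, so $|G : w(G)| > |N : w(N)|$ --- a contradiction. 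Thus $w(G/N) = 1$, completing the induction. (When $G$ is simple this is the degenerate case $N = 1$: a $w$-maximal simple group $S$ satisfies $w(S) = 1$, since otherwise the normal subgroup $w(S)$ equals $S$, and then $|S : w(S)| = 1$ fails to exceed $|1 : w(1)| = 1$.)

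The only mildly delicate point is the composition factor $G/N$ lying ``above'' $N$: hereditary $w$-maximality is visibly inherited by subgroups but not obviously by quotients, so one cannot simply recurse into $G/N$. This is exactly where $w$-maximality of $G$ itself enters, through the index identity $|G : w(G)| = |N : N \cap w(G)|$ combined with the containment $w(N) \leq N \cap w(G)$; everything else is a routine application of the isomorphism theorems and Jordan--H\"older.
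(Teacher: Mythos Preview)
Your proof is correct and follows essentially the same route as the paper: both peel off a simple top quotient $G/N$ (with $N$ a maximal normal subgroup), handle the lower composition factors by induction via the hereditarily $w$-maximal subgroup $N$, and for the top factor argue that $w(G/N)\neq 1$ would force $w(G)N=G$, whence $\lvert G:w(G)\rvert = \lvert N:N\cap w(G)\rvert \leq \lvert N:w(N)\rvert$, contradicting $w$-maximality of $G$. You spell out the inductive mechanism and the role of Jordan--H\"older more explicitly than the paper does, but the argument is the same.
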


\begin{proof}
  Consider a descending chain $G = N_0 \trianglerighteq N_1
  \trianglerighteq \ldots \trianglerighteq N_k = 1$ of subnormal
  subgroups of $G$ such that $N_{i-1}/N_i$ is simple for each $i \in
  \{1, \ldots, k \}$. By induction on the composition length it is
  enough to prove that $w$ vanishes on $G/N_1$.  For a contradiction,
  assume that $w$ does not vanish on $G/N_1$ so that, in particular $G
  = w(G) N_1$.  Then $\lvert G:w (G) \rvert = \lvert N_1:N_1\cap w (G)
  \rvert \leq \lvert N_1:w (N_1) \rvert$, which contradicts the
  $w$-maximality of~$G$.
\end{proof}

\begin{que}
  Is there a uniform bound on the derived length of a hereditarily
  $\delta_2$-maximal group, where $\delta_2 = [[x_1,x_2],[y_1,y_2]]$?
\end{que}

\section{Hereditarily $d$-maximal groups} \label{sec:hereditarily_d}

More definitive results can be obtained for hereditarily $d$-maximal
groups which we define as follows.  A group $G$ is said to be
\emph{hereditarily $d$-maximal}, if every subgroup $H$ of $G$ is
$d$-maximal.  Of course, a finite $p$-group $G$ is hereditarily
$d$-maximal if and only if $G$ is hereditarily $w$-maximal for $w =
x^p[y,z]$.

\begin{lem}\label{lem2}
  Let $G$ be a $d$-maximal group. Then $G$ is finitely generated, and
  for every maximal subgroup $M$ of $G$ we have $d(G) = d(M) + 1$.
\end{lem}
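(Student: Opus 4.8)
The plan is to extract both parts of the statement from one elementary observation: if $M$ is a maximal subgroup of $G$ and $g \in G \setminus M$, then $\langle M, g \rangle = G$ by maximality, so $G$ is generated by $d(M) + 1$ elements and hence $d(G) \le d(M) + 1$. When $G$ is $d$-maximal the defining inequality gives $d(M) < d(G)$, and therefore $d(M) + 1 \le d(G)$; combining the two inequalities yields $d(G) = d(M) + 1$. Observe that this already forces $d(M)$, and hence $d(G)$, to be finite, so for a $d$-maximal group possessing a maximal subgroup finite generation is automatic. The only real issue, then, is to guarantee that a $d$-maximal group has a maximal subgroup at all.

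Accordingly, the first thing I would do is prove that $G$ is finitely generated, and I expect this to be the main obstacle. The natural approach is by contradiction: assuming $d(G)$ equals an infinite cardinal $\kappa$, one wants to produce a \emph{proper} subgroup $H$ with $d(H) \ge \kappa$, which contradicts $d$-maximality. Starting from a generating set of cardinality $\kappa$, the idea is to split off, or delete, a small portion of it and verify that what remains still generates a large proper subgroup. In the cases that matter here this is routine — for finite groups it is vacuous, and for $p$-groups one passes to the Frattini quotient, where $d(H) \ge \dim_{\F_p}\bigl( H\Phi(G)/\Phi(G) \bigr)$ for $\Phi(G) \le H \le G$ and the $\F_p$-space $G/\Phi(G)$ plainly has proper subspaces of full dimension — but in greater generality one must exclude exotic infinitely generated examples of quasicyclic type, such as $C_{p^\infty}$, all of whose proper subgroups happen to be finitely generated; this is the point that has to be pinned down, using whatever finiteness hypothesis is in force.

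Once $G$ is known to be finitely generated and non-trivial, it has a maximal subgroup: the union of a chain of proper subgroups of a finitely generated group is again a proper subgroup, so Zorn's lemma applies to the poset of proper subgroups. It then remains only to take an arbitrary maximal subgroup $M$ of $G$ and run the opening observation: picking $g \in G \setminus M$ gives $\langle M, g \rangle = G$, hence $d(G) \le d(M) + 1$, while $d$-maximality gives $d(M) + 1 \le d(G)$, so $d(G) = d(M) + 1$.
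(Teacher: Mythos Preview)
Your core argument --- pick $g \in G \setminus M$, observe $\langle M, g\rangle = G$ by maximality, hence $d(G) \le d(M)+1$, and combine with $d(M) < d(G)$ from $d$-maximality to get $d(G) = d(M)+1$ and in particular $d(G)$ finite --- is exactly the paper's proof. The paper writes: ``Since $G$ is $d$-maximal, the inequality $d(G) \ge d(M)+1$ holds and in particular $d(M)$ is finite. On the other hand, if $g \in G \setminus M$, then the maximality of $M$ implies $\langle M \cup \{g\}\rangle = G$, thus $d(G) \le d(M)+1$.''

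You are right to be uneasy about finite generation in the absence of further hypotheses, and your analysis here is in fact \emph{more} careful than the paper's. The paper simply opens with ``If $G$ is trivial there is nothing to show. Now suppose that $M$ is a maximal subgroup of $G$,'' tacitly assuming that a non-trivial $d$-maximal group possesses a maximal subgroup. Your example $C_{p^\infty}$ shows this is not automatic: every proper subgroup is finite cyclic, so $d(H) \le 1 < \infty = d(C_{p^\infty})$ and $C_{p^\infty}$ is $d$-maximal in the sense of the definition, yet it is not finitely generated and has no maximal subgroup. Thus the lemma, read literally for arbitrary groups, is false, and your instinct that ``whatever finiteness hypothesis is in force'' must be invoked is on the mark. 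The paper does not address this point here; only a few lines later does it restrict to residually-finite groups, where every non-trivial group has a maximal subgroup of finite index and the argument goes through. In short: the argument you and the paper share is complete once a maximal subgroup is available, but neither you nor the paper supplies the missing hypothesis guaranteeing one exists.
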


\begin{proof}
  If $G$ is trivial there is nothing to show. Now suppose that $M$ is
  a maximal subgroup of $G$. Since $G$ is $d$-maximal, the inequality
  $d(G) \geq d(M) + 1$ holds and in particular $d(M)$ is finite. On
  the other hand, if $g \in G \setminus M$, then the maximality of $M$
  implies $\langle M \cup \{g\} \rangle = G$, thus $d(G) \leq d(M) +
  1$.
\end{proof}

\begin{lem}\label{lem4}
  Let $G$ be a hereditarily $d$-maximal group. Then $G$ is
  equi\-chained of finite length, i.e.\ there exists a finite chain $1
  = M_0 \lneqq M_1 \lneqq \ldots \lneqq M_r = G$, where $M_i$ is
  maximal in $M_{i+1}$ for all indices $i$, and all such chains have
  the same length. Moreover this length is equal to $d(G)$.
\end{lem}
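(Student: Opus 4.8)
The plan is to argue by induction on $d(G)$, using Lemma~\ref{lem2} as the engine. By Lemma~\ref{lem2}, since $G$ is $d$-maximal it is finitely generated; if $G$ is trivial the statement is vacuous with $r = 0$, so assume $d(G) = n \geq 1$. The first step is to observe that $G$ is not trivial, so it possesses at least one maximal subgroup $M$; since $G$ is finitely generated such maximal subgroups exist. Any such $M$ is itself hereditarily $d$-maximal (being a subgroup of $G$), and by Lemma~\ref{lem2} we have $d(M) = d(G) - 1 = n - 1$. By the inductive hypothesis, $M$ is equichained of finite length $n - 1$.

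The second step is to show that \emph{every} chain $1 = M_0 \lneqq M_1 \lneqq \cdots \lneqq M_r = G$ with $M_i$ maximal in $M_{i+1}$ has length $r = n$. Given such a chain, $M_{r-1}$ is a maximal subgroup of $G$, so the truncated chain $1 = M_0 \lneqq \cdots \lneqq M_{r-1}$ is a maximal chain of $M_{r-1}$; by induction (applied to $M_{r-1}$, which is hereditarily $d$-maximal of generator-number $n-1$) this has length $r - 1 = n - 1$, hence $r = n$. Conversely one must check such chains exist at all: starting from $G$ and repeatedly passing to a maximal subgroup, the generator-number drops by exactly $1$ at each step by Lemma~\ref{lem2}, so after $n$ steps one reaches a group needing $0$ generators, i.e.\ the trivial group; this produces a chain of length exactly $n$. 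This simultaneously establishes existence of equichains, the common length $d(G)$, and the fact that the length is finite.

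The main obstacle — and it is a mild one — is making sure the induction is set up so that the statement ``all maximal chains have length $d(G)$'' is what gets fed back in, rather than just ``some maximal chain has this length.'' The key leverage is Lemma~\ref{lem2}'s strong conclusion $d(G) = d(M) + 1$ for \emph{every} maximal subgroup $M$ (not merely $d(M) < d(G)$), which forces the generator-number to behave like a rank function; without this equality the lengths of different chains could in principle differ. One should also note, to keep the induction clean, that every subgroup occurring in any such chain is again hereditarily $d$-maximal, so the inductive hypothesis is available at each stage. With these observations the argument is a routine downward induction, and the lemma follows.
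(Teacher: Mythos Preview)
Your proposal is correct and is precisely the argument the paper has in mind: the paper's proof reads in its entirety ``This follows by induction from Lemma~\ref{lem2},'' and what you have written is a careful unpacking of that induction. The point you single out---that Lemma~\ref{lem2} gives the exact equality $d(G)=d(M)+1$ for \emph{every} maximal subgroup, so that $d$ behaves like a rank function---is exactly the content that makes the induction go through.
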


\begin{proof}
  This follows by induction from Lemma~\ref{lem2}.
\end{proof}

The existence of so-called Tarski groups, i.e.\ infinite groups all of
whose non-trivial proper subgroups have prime order $p$, indicates
that it may be difficult to classify hereditarily $d$-maximal groups
in general; see \cite{Ol80,Ol82} for a construction of such groups.

In order to avoid these problems we choose to restrict our attention
to residually-finite groups. Note that every non-trivial
residually-finite group has at least one maximal subgroup of finite
index. Thus, if $G$ is a residually-finite $d$-maximal group, then
Lemma~\ref{lem2} and an easy induction on $d(G)$ show that $G$ is in
fact finite. It remains to present a classification of \emph{finite}
hereditarily $d$-maximal groups.

For $n \in \N$ let $\nu(n)$ denote the number of prime divisors,
counting repetitions, of $n$.  By a classical result of Iwasawa, a
finite group is equi\-chained if and only if it is super\-soluble; see
\cite[Satz VI.9.7]{Hu67} or \cite{Iw41b}. This gives

\begin{pro}\label{prop5}
  Let $G$ be a finite group. Then $G$ is hereditarily $d$-maximal if
  and only if $d(G) = \nu( \lvert G \rvert )$.
\end{pro}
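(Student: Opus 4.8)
The plan is to establish the equivalence by connecting three conditions: being hereditarily $d$-maximal, being equichained of length $\nu(\lvert G\rvert)$, and satisfying $d(G)=\nu(\lvert G\rvert)$. First I would invoke Lemma~\ref{lem4}: since $G$ is hereditarily $d$-maximal, it is equichained of finite length, and that common length equals $d(G)$. On the other hand, every maximal chain $1=M_0\lneqq M_1\lneqq\ldots\lneqq M_r=G$ in any finite group can be refined (it cannot, since each $M_i$ is maximal in $M_{i+1}$) so that each index $\lvert M_{i+1}:M_i\rvert$ is \emph{prime} precisely when $G$ is supersoluble --- this is the cited theorem of Iwasawa. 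In that case $r=\nu(\lvert G\rvert)$, since the product of the prime indices along the chain is $\lvert G\rvert$. Thus for a hereditarily $d$-maximal finite group we get $d(G)=r=\nu(\lvert G\rvert)$, giving one direction.

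For the converse, suppose $d(G)=\nu(\lvert G\rvert)$; I must show every subgroup $H\leq G$ is $d$-maximal. The key observation is the general inequality $d(H)\leq\nu(\lvert H\rvert)$, valid for \emph{any} finite group $H$ (choose a maximal chain in $H$ and pick one new generator at each step; each step has prime-power index, so one can in fact get one generator per prime in the chain, and more carefully one gets $d(H)\le\nu(|H|)$). Combined with the hypothesis, $G$ must be supersoluble: indeed if $G$ were not supersoluble, by Iwasawa's theorem $G$ is not equichained, so some maximal chain has length strictly less than $\nu(\lvert G\rvert)$; but $d(G)$ is bounded above by the length of a \emph{shortest} generating-compatible chain, which would force $d(G)<\nu(\lvert G\rvert)$, a contradiction. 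Hence $G$ is supersoluble, every maximal chain has length $\nu(\lvert G\rvert)$, and the same applies to every subgroup $H$ (subgroups of supersoluble groups are supersoluble), giving that all maximal chains in $H$ have length $\nu(\lvert H\rvert)$. Since for a maximal subgroup $M<H$ one always has $d(H)\le d(M)+1$, while chain-length considerations show $d(H)\ge\nu(\lvert H\rvert)=\nu(\lvert M\rvert)+1\ge d(M)+1$, we conclude $d(H)=d(M)+1>d(M)$ for every maximal subgroup $M$, i.e.\ $H$ is $d$-maximal.

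The subtle point, and the main obstacle, is the equality $d(G)=\nu(\lvert G\rvert)$ being achievable \emph{exactly} on supersoluble groups with the hereditary property: one must be careful that the inequality $d(H)\le\nu(\lvert H\rvert)$ is sharp along every maximal chain only when that chain consists of prime-index steps throughout, which is the supersolubility condition, and that $d$-maximality of a subgroup is equivalent to all its maximal chains being ``$d$-tight''. I would organize the argument so that Iwasawa's characterization of equichained finite groups as supersoluble groups is the single external input, and everything else reduces to the elementary bookkeeping of chain lengths versus numbers of generators, using Lemma~\ref{lem2} to pass between them. Care is also needed because $\nu$ counts prime divisors \emph{with multiplicity}, so the bound $d(H)\le\nu(\lvert H\rvert)$ rather than $d(H)\le(\text{number of distinct primes})$ is what is relevant, and this follows by picking up one generator at each prime-index step of a chosen chain.
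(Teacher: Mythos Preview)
Your forward direction matches the paper's: invoke Lemma~\ref{lem4} to get that $G$ is equichained of length $d(G)$, then Iwasawa to identify equichained finite groups with supersoluble ones, whence the common chain length is $\nu(\lvert G\rvert)$.

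For the converse, however, the paper takes a much shorter route and your argument has a gap. The paper simply observes that $d(G)=\nu(\lvert G\rvert)$ forces $d(H)=\nu(\lvert H\rvert)$ for \emph{every} subgroup $H\le G$, and from this hereditarily $d$-maximal follows at once (since $K\lneqq H$ implies $\nu(\lvert K\rvert)<\nu(\lvert H\rvert)$). The reason is the elementary inequality
\[
d(G)\;\le\; d(H)+\nu\bigl(\lvert G:H\rvert\bigr),
\]
obtained by taking $d(H)$ generators for $H$ and then climbing any maximal chain from $H$ up to $G$, adding one element per step; there are at most $\nu(\lvert G:H\rvert)$ steps. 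Combined with $d(H)\le\nu(\lvert H\rvert)$ and the hypothesis $d(G)=\nu(\lvert G\rvert)$, this pins down $d(H)=\nu(\lvert H\rvert)$. No appeal to supersolubility is needed on this side.

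Your detour through supersolubility is correct up to the point where you conclude that $G$, and hence every $H\le G$, is supersoluble. But the crucial line ``chain-length considerations show $d(H)\ge\nu(\lvert H\rvert)$'' is not justified: nothing you have established about chains \emph{inside} $H$ yields a lower bound on $d(H)$. Supersolubility tells you every maximal chain in $H$ has length $\nu(\lvert H\rvert)$, but that only gives $d(H)\le\nu(\lvert H\rvert)$, not the reverse. The missing ingredient is precisely the inequality displayed above, which links $d(H)$ to $d(G)$ via a chain going \emph{upward} from $H$ to $G$; equivalently, you could run a descending induction from $G$ down through maximal subgroups, using $d(M)\ge d(G)-1$ at each step. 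Once you insert that, the supersolubility detour becomes redundant and you recover the paper's argument.
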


\begin{proof}
  Suppose that $G$ is $d$-maximal. Then Iwasawa's characterisation of
  finite equi\-chained groups and Lemma~\ref{lem4} imply that $G$ is
  super\-soluble and $d(G) = \nu(\lvert G \rvert )$.

  Now suppose that $d(G) = \nu( \lvert G \rvert )$. Then for every
  subgroup $H \leq G$ we have $d(H) = \nu( \lvert H \rvert )$. It
  follows that $G$ is hereditarily $d$-maximal.
\end{proof}

\begin{cor}\label{cor6}
  Every quotient of a finite hereditarily $d$-maximal group is
  hereditarily $d$-maximal.
\end{cor}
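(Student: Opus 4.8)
The plan is to reduce the statement to the numerical criterion of Proposition~\ref{prop5}: a finite group $K$ is hereditarily $d$-maximal if and only if $d(K) = \nu(\lvert K \rvert)$. So, fixing a finite hereditarily $d$-maximal group $G$ and a normal subgroup $N \trianglelefteq G$, it suffices to prove the single equality $d(G/N) = \nu(\lvert G/N \rvert)$.

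First I would record two elementary general facts. On the one hand, lifting a generating set of $G/N$ of size $d(G/N)$ to $G$ and adjoining a generating set of $N$ of size $d(N)$ yields a generating set of $G$, whence $d(G) \le d(G/N) + d(N)$. On the other hand, $\nu$ is completely additive, so $\nu(\lvert G \rvert) = \nu(\lvert N \rvert) + \nu(\lvert G/N \rvert)$. Moreover $N$ is itself hereditarily $d$-maximal, since every subgroup of $N$ is a subgroup of $G$; hence Proposition~\ref{prop5} gives $d(N) = \nu(\lvert N \rvert)$.

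The remaining input is the upper bound $d(G/N) \le \nu(\lvert G/N \rvert)$. Here I would use that $G$ is supersoluble --- this is precisely what the proof of Proposition~\ref{prop5} establishes, via Lemma~\ref{lem4} and Iwasawa's characterisation of finite equichained groups (\cite[Satz VI.9.7]{Hu67}, cf.\ \cite{Iw41b}) --- and that supersolubility passes to quotients, so $G/N$ is supersoluble. In a finite supersoluble group $H$ every maximal subgroup $M$ has prime index and is again supersoluble, so the elementary bound $d(H) \le d(M) + 1$ together with induction on $\lvert H \rvert$ gives $d(H) \le \nu(\lvert H \rvert)$; applied to $H = G/N$ this is the bound we need. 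Chaining the inequalities,
\[
\nu(\lvert G \rvert) = d(G) \le d(G/N) + d(N) \le \nu(\lvert G/N \rvert) + \nu(\lvert N \rvert) = \nu(\lvert G \rvert),
\]
forces $d(G/N) = \nu(\lvert G/N \rvert)$, and Proposition~\ref{prop5} then identifies $G/N$ as hereditarily $d$-maximal.

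I do not anticipate a real obstacle: the only mildly delicate point is the inequality $d(H) \le \nu(\lvert H \rvert)$ for finite supersoluble $H$, which is routine once one invokes that maximal subgroups of supersoluble groups have prime index. An alternative route, bypassing supersolubility altogether, would be to show directly --- by induction on $\lvert N \rvert$ through successive prime-index normal steps --- that a surjection of hereditarily $d$-maximal finite groups lowers $d$ by exactly the composition length of the kernel; but routing through Proposition~\ref{prop5} keeps the bookkeeping shortest.
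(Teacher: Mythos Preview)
Your argument is correct and is precisely the natural way to unpack the corollary from Proposition~\ref{prop5}; the paper itself states Corollary~\ref{cor6} without proof, treating it as immediate from the numerical criterion $d(G)=\nu(\lvert G\rvert)$ together with the supersolubility established in the proof of Proposition~\ref{prop5}. Your write-up simply makes explicit the two inequalities (the generator bound $d(G)\le d(G/N)+d(N)$ and the supersoluble bound $d(H)\le\nu(\lvert H\rvert)$) that the paper leaves to the reader.
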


\begin{lem}\label{lem7}
  Let $G$ be a finite nilpotent group. Then $G$ is hereditarily
  $d$-maximal if and only if $G$ is elementary abelian.
\end{lem}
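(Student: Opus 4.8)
The plan is to deduce the result from Proposition~\ref{prop5}, which reduces the property of being hereditarily $d$-maximal (for a finite group) to the single numerical identity $d(G) = \nu(\lvert G \rvert)$, and then to exploit the product decomposition of a finite nilpotent group into its Sylow subgroups together with the Frattini/Burnside-basis description of $d$.

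For the easy implication, if $G$ is elementary abelian of order $p^n$ then $d(G) = n = \nu(p^n) = \nu(\lvert G \rvert)$, so $G$ is hereditarily $d$-maximal by Proposition~\ref{prop5}. For the converse, suppose $G$ is finite nilpotent and hereditarily $d$-maximal, so $d(G) = \nu(\lvert G \rvert)$. Write $G = P_1 \times \cdots \times P_k$ as the direct product of its nontrivial Sylow subgroups, with $\lvert P_i \rvert = p_i^{a_i}$, the $p_i$ distinct primes and each $a_i \geq 1$. Since the orders $\lvert P_i \rvert$ are pairwise coprime, passing to $G/\Phi(G) = \prod_i P_i/\Phi(P_i)$ and using the Burnside basis theorem prime by prime gives $d(G) = \max_{1 \leq i \leq k} d(P_i)$; moreover $d(P_i) = \log_{p_i}\lvert P_i : \Phi(P_i) \rvert \leq a_i$, with equality precisely when $\Phi(P_i) = 1$, i.e.\ when $P_i$ is elementary abelian. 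On the other hand $\nu(\lvert G \rvert) = a_1 + \cdots + a_k$. Hence $a_1 + \cdots + a_k = d(G) \leq \max_i a_i$, and since every $a_i \geq 1$ this forces $k = 1$. Then $d(G) = a_1 = \log_p \lvert G \rvert$, so $\Phi(G) = 1$, i.e.\ $G$ is elementary abelian.

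I do not expect a genuine obstacle here: once Proposition~\ref{prop5} is available, the argument is a short counting argument resting on the behaviour of $d$ under direct products of coprime order and on the inequality $d(P) \leq \log_p \lvert P \rvert$ for a $p$-group, with equality exactly in the elementary abelian case. (One could equally argue directly: if $G$ had two distinct prime divisors $p$ and $q$ it would contain a cyclic subgroup $\cong C_{pq}$, which is not $d$-maximal; and a $p$-group $P$ with $\Phi(P) \neq 1$ satisfies $d(P) < \log_p\lvert P \rvert = \nu(\lvert P \rvert)$ and so fails to be hereditarily $d$-maximal by Proposition~\ref{prop5}. The only point needing the slightest care is checking that $d$ of a nilpotent group is the maximum of the $d$ of its Sylow subgroups, which is standard.)
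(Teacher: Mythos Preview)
Your proof is correct and follows essentially the same approach as the paper: both use the Sylow decomposition $G = P_1 \times \cdots \times P_k$ together with $d(G) = \max_i d(P_i)$ and Proposition~\ref{prop5} to force $G$ to be a $p$-group with $\Phi(G)=1$. The only cosmetic difference is that the paper invokes $d$-maximality directly (if $k>1$ then the Sylow factor attaining the maximum is a proper subgroup with the same $d$), whereas you first apply Proposition~\ref{prop5} and then use the arithmetic $\sum a_i = d(G) \leq \max a_i$ to reach the same conclusion.
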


\begin{proof}
  Suppose that $G$ is hereditarily $d$-maximal. Being nilpotent, the
  group $G$ is the direct product of its Sylow subgroups, $G = P_1
  \times \ldots \times P_r$ say. Then $d(G) = \max \{ d(P_i) \mid 1
  \leq i \leq r \}$, and since $G$ is $d$-maximal, this implies that
  $G$ is a $p$-group for a suitable prime $p$. Proposition~\ref{prop5}
  asserts that $\log_p \lvert G/\Phi(G)\rvert = d(G) = \nu( \lvert G
  \rvert ) = \log_p \lvert G \rvert$, so $G$ is equal to its Frattini
  quotient, thus an elementary $p$-group.

  It is clear that elementary abelian groups are hereditarily
  $d$-maximal.
\end{proof}

\begin{teo} \label{thm:classif}
 Let $G$ be a finite group. Then $G$ is hereditarily $d$-maximal if
 and only if one of the following holds:
 \begin{enumerate}
 \item[\textup{(1)}] there exist primes $p,q$ such that $G = \langle x
   \rangle \ltimes Q$ where $\langle x \rangle \cong C_p$, $Q$ is an
   elementary $q$-group and $x$ acts on $Q$ as a non-trivial scalar;
 \item[\textup{(2)}] there exists a prime $p$ such that $G$ is an elementary
   $p$-group.
 \end{enumerate}
\end{teo}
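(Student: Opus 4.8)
I would establish the theorem by combining Proposition~\ref{prop5}, Corollary~\ref{cor6}, Lemma~\ref{lem7}, and Iwasawa's structure theorem for equichained finite groups, reducing the classification to a short analysis of the non-nilpotent case. The forward direction (that groups of types (1) and (2) are hereditarily $d$-maximal) is quick: for type (2) it is Lemma~\ref{lem7}, and for type (1) one checks directly that $d(G) = 2 = \nu(\lvert G \rvert)$ — indeed $Q$ being an elementary $q$-group and $x$ acting as a non-trivial scalar forces $[G,G] = Q$ and $\Phi(G) = 1$ since no maximal subgroup contains both $x$ and all of $Q$ — and then invoke Proposition~\ref{prop5} together with the observation that every subgroup of such a $G$ is again of type (1) or (2) (a subgroup either lies in $Q$, hence is elementary abelian, or contains a conjugate of $x$ and a $z_V$-submodule of $Q$, on which $x$ still acts as the same scalar).

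For the reverse direction, suppose $G$ is a finite hereditarily $d$-maximal group which is not an elementary $p$-group. By Lemma~\ref{lem7}, $G$ is not nilpotent. By Proposition~\ref{prop5}, $G$ is equichained, hence supersoluble by Iwasawa's theorem; in particular $G$ has a normal Sylow subgroup for the largest prime divisor, and $G/[G,G]$ is a non-trivial hereditarily $d$-maximal nilpotent quotient (using Corollary~\ref{cor6}), so by Lemma~\ref{lem7} it is elementary abelian, say a $p$-group, giving $d(G) = d(G/[G,G])$. The plan is then to show $d(G) = 2$: if $d(G) \geq 3$, then $G/[G,G] \cong C_p^{\,d}$ with $d \geq 3$; I would pick a complement-type maximal subgroup and a suitable Hall subgroup to exhibit a subgroup $H$ with $d(H) < \nu(\lvert H\rvert)$, contradicting Proposition~\ref{prop5} — concretely, a supersoluble non-nilpotent group with elementary abelian Frattini quotient of rank $\geq 3$ will contain a proper subgroup that is still non-nilpotent with a strictly smaller number of generators than prime factors, because one can split off a "redundant" central-type factor. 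Thus $d(G) = 2$ and $\nu(\lvert G\rvert) = 2$, so $\lvert G\rvert = p q$ or $p^2$; the latter is nilpotent and excluded, so $\lvert G\rvert = pq$ with $p \neq q$, and $G = \langle x \rangle \ltimes Q$ with $\lvert\langle x\rangle\rvert = p$, $\lvert Q\rvert = q$, $Q$ normal, $x$ acting non-trivially — which is exactly case (1), the scalar action being automatic since $Q$ has prime order.

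The main obstacle is the step bounding $d(G) \leq 2$: one must rule out non-nilpotent supersoluble groups of larger generator rank being hereditarily $d$-maximal, and the cleanest route is probably not a direct subgroup construction but rather an inductive argument via Lemma~\ref{lem4} and Corollary~\ref{cor6}. I would instead argue: take a maximal subgroup $M$ of $G$; by Lemma~\ref{lem2}, $d(M) = d(G) - 1$, and $M$ is again hereditarily $d$-maximal, so by induction $M$ is of type (1) or (2). If some maximal $M$ is of type (2) (elementary abelian $q$-group), then $d(M) = d(G) - 1$ and $\lvert M \rvert = q^{d(G)-1}$, and one analyses the extension $G$ over $M$: supersolubility forces $\lvert G : M\rvert$ prime and $G$ acts on $M$ preserving a chief series, which combined with $d(G) = \nu(\lvert G\rvert)$ pins down $d(G) = 2$. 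If every maximal subgroup is of type (1) with $d = 2$, then $d(G) = 3$, and one derives a contradiction by noting that a group all of whose maximal subgroups have order a product of two primes, with $G$ itself supersoluble of order $p_1 p_2 p_3$, cannot have $d(G) = 3$ unless $G$ is nilpotent. Carefully organizing this induction, keeping track of which prime plays which role, is where the real work lies; everything else is bookkeeping with the results already proved.
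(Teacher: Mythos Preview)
Your proposal rests on a misreading of case~(1): there $Q$ is an elementary abelian $q$-group of \emph{arbitrary} rank, not necessarily of order $q$. Concretely, take $G = \langle x \rangle \ltimes C_q^{\,2}$ with $x$ of order $p$ acting as a non-trivial scalar. Then $d(G) = 3$ (any two elements generate a subgroup of order at most $pq$) and $\nu(\lvert G\rvert) = 3$, so $G$ is hereditarily $d$-maximal by Proposition~\ref{prop5}; yet $\lvert G\rvert = pq^2 \neq pq$. This single example breaks both halves of your argument. In the forward direction, your claim that $d(G) = 2 = \nu(\lvert G\rvert)$ for all groups of type~(1) is false. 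In the reverse direction, the entire strategy of ``bounding $d(G) \leq 2$'' is aimed at a false conclusion, so neither the vague subgroup construction nor the induction on maximal subgroups can possibly succeed---and indeed your inductive case analysis already presumes that type-(1) maximal subgroups have $d = 2$.

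The paper's route avoids this trap. After observing that $G$ is supersoluble and that both $D = [G,G] \cong C_q^{\,s}$ and $G/D \cong C_p^{\,r}$ are elementary abelian (via Corollary~\ref{cor6} and Lemma~\ref{lem7}), it does \emph{not} try to bound $d(G)$ globally. Instead it uses $d(G) = \nu(\lvert G\rvert) = r + s$ to see that any lift $H = \langle x_1,\ldots,x_r\rangle$ of $G/D$ already has order $p^r$, giving a splitting $G = H \ltimes D$; then, since each $\langle y\rangle \leq D$ is normal (supersolubility) and $d(\langle x_i, y\rangle) = 2$ forces $\lvert \langle x_i, y\rangle\rvert = pq$, the centraliser $\textup{C}_H(y)$ is trivial for every $1 \neq y \in D$. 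This forces $r = 1$ and the scalar action---the parameter that collapses is $r$, not $s$. Your sketch can be repaired by following this line: keep $s$ free and argue that the complement to $D$ must be cyclic of prime order.
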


\begin{proof}
  It is clear that the groups given in the list are hereditarily
  $d$-maximal. For the opposite direction, suppose that $G$ is
  hereditarily $d$-maximal. In the proof of Proposition~\ref{prop5} it
  was seen that $G$ is super\-soluble. This implies that the derived
  subgroup $D := [G,G]$ of $G$ is nilpotent. Corollary~\ref{cor6} and
  Lemma~\ref{lem7} show that $G/D$ and $D$ are elementary abelian,
  i.e.\ $G/D \cong C_p^r$ and $D \cong C_q^s$ for primes $p,q$ and
  $r,s \in \N_0$.

  If $D = 1$, there is nothing more to prove. So assume that $r,s \geq
  1$. Choose generators $x_1, \ldots, x_r$ for $G$ modulo $D$.
  Proposition~\ref{prop5} gives $d(G) = \nu(\lvert G \rvert ) = r +
  s$, which has strong consequences:
  \begin{enumerate}
  \item[(i)] every non-trivial element of $G$ has order $p$ or $q$;
  \item[(ii)] the group $H := \langle x_1, \ldots, x_r \rangle$ has
    order $p^r$, in particular $G = H \ltimes D$ is a split extension
    of $D$ by $H$;
  \item[(iii)] for every $y \in D \setminus \{1\}$ the group $\langle
    y \rangle \cong C_q$ is normal in $G$ and its centraliser
    $\textup{C}_H(\langle y \rangle)$ in $H$ is trivial; consequently
    $H = \langle x_1 \rangle \cong C_p$ and $x_1$ acts as a
    non-trivial scalar on $D$.
  \end{enumerate}
\end{proof}

%%%%%

\end{document}